\newcommand{\cP}{\mathcal{P}}
\newcommand{\N}{\mathbb{N}}
\newcommand{\Z}{\mathbb{Z}}
\newcommand{\sC}{\mathcal{C}}
\newcommand{\FF}{\mathbb{F}}
\newcommand{\CAT}{\mathrm{CAT}}
\newcommand{\cS}{\mathcal{S}}
\newcommand{\cG}{\mathcal{G}}
\newcommand{\FP}{\mathrm{FP}}
\newcommand{\normal}{\trianglelefteq}
\newcommand{\op}{\text{ op }}
\newcommand{\qmin}{q_{min}}
\newcommand{\qmax}{q_{max}}
\newcommand{\Addresses}{{
  \bigskip
  \footnotesize

  Peter Abramenko, \textsc{Department of Mathematics, University of Virginia,
    Charlottesville, VA 22904}\par\nopagebreak
  \textit{E-mail address}: \texttt{pa8e@virginia.edu}

  \medskip

  Zachary Gates, \textsc{Department of Mathematics \& Computer Science, Wabash College, Crawfordsville, IN 47933}\par\nopagebreak
  \textit{E-mail address}: \texttt{gatesz@wabash.edu}

}}
\theoremstyle{definition}
\newtheorem{definition}{Definition}[section]
\newtheorem{theorem}{Theorem}[section]
\newtheorem{lemma}{Lemma}[section]
\newtheorem{proposition}{Proposition}[section]
\newtheorem{corollary}{Corollary}[section]
\newtheorem{remark}{Remark}[section]
\title{Finite presentability of Kac-Moody groups over finite fields}
\author{Peter Abramenko and Zachary Gates}
\begin{document}
\maketitle
\begin{abstract}
Let $\cG$ be a Kac-Moody group functor in the sense
of Tits, with associated Coxeter system $(W,S)$. For any field $F$,
the group $\cG(F)$ is finitely generated iff $F$ is finite. We are
interested in the question when $G = \cG(\FF_q)$ is finitely
presented. If $(W,S)$ is 2-spherical, it is well known that this is
``almost always'' the case. It is conjectured that $G$ is never
finitely presented if $(W,S)$ is not 2-spherical (which means that
there exist $s,t \in S$ with $|st| = \infty$), which so far (to the
best of our knowledge) has only been proved for the type
$\tilde{A_1}$, and maybe also, though we don't know a reference for
this, in the case where $|st| = \infty$ for all $s \neq t \in S$. In
this paper, we show that $G$ is not finitely presented for a
significantly larger class of Coxeter systems which are not
2-spherical, giving much stronger evidence that the conjecture is
true in general. Important tools of the proof are the twin BN-pair
and the corresponding twin building associated to $G = \cG(\FF_q)$.
\end{abstract}

\section{Introduction}
It is clear that Kac-Moody groups $\cG(\FF_q)$ over finite fields are finitely generated. It is then a natural question to ask whether they are also finitely presented.
 In the $2$-spherical case, Abramenko and M{\"u}hlherr showed that $\cG(\FF_q)$ is finitely presented excepting a few cases over $\FF_2$ and $\FF_3$ \cite{AM97}. On
 the other hand, it is known that $\cG(\FF_q)$ is not finitely presented if it is of type $\tilde{A_1}$, and probably more generally, if the Coxeter matrix $M = (m_{ij})$
 has entries $m_{ij} = \infty$ for all $i\neq j$, though we know no reference for this.\\
It has been conjectured that if the Coxeter diagram for $\cG(\FF_q)$ has at least one $\infty$, then $\cG(\FF_q)$ is not finitely presented. We will prove that $\cG(\FF_q)$
is not finitely presented under some conditions on the Coxeter diagram. In particular, if just one edge is labeled $\infty$ and the rest of the diagram is "as spherical as possible",
then the group is not finitely presented (Corollary \ref{cor: spherical as possible}). We also prove that the conjecture holds for all rank $3$ cases. The main tool will be a theorem
of Gandini \cite{Gan12}. His result was pieced together from two prior results. It follows from Brown's filtration criterion \cite{Bro87} that if a group $G$ acts cellularly on an
$n$-dimensional contractible CW-complex with stabilizers of type $\FP_{\infty}$, then $G$ is of type $\FP_{\infty}$ if and only if it is of type $\FP_n$. By a cellular action,
we mean that $G$ permutes the cells and whenever $G$ stabilizes a cell, it fixes the cell pointwise. Next Kropholler showed that if $G$ is of type $\FP_{\infty}$ and belongs to a
certain large class of groups, then there is a bound on the orders of its finite subgroups \cite{Kro93}. Gandini put these results together to show that if $G$ lies in this large
class of groups and acts on an $n$-dimensional CW-complex with stabilizers of type $\FP_{\infty}$ and has no bound on the orders of its finite subgroups, then $G$ is not $\FP_n$.\\
These results address the homological finiteness properties $\FP_n$, but it is a fact that finite presentation of a group implies that the group is of type $\FP_2$. Therefore,
we can state the following theorem, which is Gandini's theorem applied to our specific context:

\begin{theorem}[\cite{Gan12}]\label{thm: Gandini specific}
If $G$ acts cellularly on an $n$-dimensional contractible CW-complex
$X$ with finite stabilizers of unbounded order, then $G$ is not of
type $\FP_n$. In particular, if $n = 2$ (e.g. if $X$ is a product of
two trees), then $G$ is not finitely presented.
\end{theorem}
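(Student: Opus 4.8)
The plan is to combine the three results recalled just above --- Brown's filtration criterion, Kropholler's bound on the orders of finite subgroups, and Gandini's synthesis of the two --- and then to invoke the elementary implication ``finitely presented $\Rightarrow$ type $\FP_2$''. No new idea is required beyond bookkeeping, so the argument is short.

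First I would note that every finite group is of type $\FP_\infty$ (in each degree the relevant free module may be taken finitely generated, even though finite groups are not of type $\FP$). Hence the hypothesis that $X$ has finite stabilizers is a special case of ``stabilizers of type $\FP_\infty$'', and Brown's filtration criterion applies to the cellular action of $G$ on the $n$-dimensional contractible CW-complex $X$: the group $G$ is of type $\FP_\infty$ if and only if it is of type $\FP_n$. In particular, if $G$ is of type $\FP_n$ then $G$ is of type $\FP_\infty$. Next, the very existence of a cellular action of $G$ on a finite-dimensional contractible CW-complex with finite stabilizers witnesses that $G$ lies in Kropholler's class $\mathbf{H}\mathfrak{F}$ (indeed $G \in \mathbf{H}_1\mathfrak{F}$, with $X$ as the required complex). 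Kropholler's theorem then guarantees that if such a $G$ is of type $\FP_\infty$, the orders of its finite subgroups are bounded. Taking the contrapositive: since the finite stabilizers --- and hence the finite subgroups --- of $G$ have unbounded order by hypothesis, $G$ is not of type $\FP_\infty$, and therefore, by the previous sentence, not of type $\FP_n$. This is exactly Gandini's theorem in our situation.

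For the ``in particular'' clause I would recall that a finite presentation $\langle x_1,\dots,x_k \mid r_1,\dots,r_m\rangle$ of $G$ produces the beginning $\Z G^{m} \to \Z G^{k} \to \Z G \to \Z \to 0$ of a free resolution of $\Z$ that is finitely generated in degrees $\le 2$ (equivalently, the presentation $2$-complex realizes $G$ as a group of type $F_2$, hence of type $\FP_2$); so finitely presented groups are of type $\FP_2$, and when $n=2$ the failure of $\FP_2$ just established forces $G$ not to be finitely presented. Finally, a product of two locally finite trees is a contractible $2$-dimensional CW-complex on which the product of two cellular actions is cellular, so this case is genuinely covered. The only point requiring real care is to match the combinatorial hypotheses --- ``cellular action'' in the sense that a cell stabilized setwise is fixed pointwise, together with $X$ being contractible and $n$-dimensional --- with the precise forms in which Brown's and Kropholler's theorems are stated, and to observe that membership in $\mathbf{H}\mathfrak{F}$ is automatic from the hypothesized action rather than an extra assumption; with those verifications made there is nothing further to prove.
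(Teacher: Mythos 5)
Your argument is correct and is essentially the same as the one the paper itself sketches in the introduction (the paper offers no independent proof, citing Gandini, whose theorem is precisely the combination of Brown's filtration criterion and Kropholler's bound that you reassemble). Your added observation that membership in Kropholler's class is automatic from the existence of the finite-dimensional contractible $G$-CW-complex with finite stabilizers is exactly the point that lets the theorem be stated without that class membership as an explicit hypothesis, so nothing is missing.
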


To show that $G = \cG(\FF_q)$ is not finitely presented if it is type $\tilde{A_1}$, one can apply this theorem with regard
to the natural action of $G$ on its associated twin building. If all non-diagonal entries in the Coxeter matrix are $\infty$,
then the theorem can be applied to $G$ acting on the Davis realization \cite{Dav08} of the twin building. However, the dimension
of the Davis realization is, in all other cases, too high, but we can sometimes adapt this approach by making careful choices for $Z$
in the  $Z$-realization of a building as defined in \cite{AB08}. The Davis realization is a specific example of this more general concept.

\section{$Z$-realization of a building}
Let $(\Delta_+,\Delta_-,\delta^*)$ be a twin building of type $(W,S)$, and let $G$ be a group acting strongly transitively on this twin building. Let
$\Delta = \Delta_{\pm}$, let $\sC = \sC(\Delta)$, and let $\delta: \sC\times\sC\to W$ be the Weyl distance. We introduce a general method for constructing
metric realizations of buildings. The idea is to give a metric model for a closed chamber and then to glue copies of this model together to provide a model
for the building so that the gluing respects some of the combinatorial structure of the original building. \\
Let $Z$ be any topological space with a family of nonempty closed subsets $Z_s$ for each $s\in S$. The space $Z$ will be the model for a closed chamber, and
$Z_s$ its $s$-panel. We define the \textit{$Z$-realization} of the building $\Delta$ as in section 12.1 of \cite{AB08} to be $Z(\Delta) = (\sC\times Z)/ \sim$,
where $(C,z)\sim(C',z')$ if and only if $z' = z$ and $\delta(C,C')\in\langle S_z\rangle$, with $S_z = \{s\in S| z\in Z_s\}$. We will use the notation $[C,z]$ to
denote an equivalence class in $Z(\Delta)$. A $Z$-chamber will then be written $Z(C) := \{[C,z]|z\in Z\}$. $Z(\Delta)$ is then a tiling of copies of $Z$, one
for each chamber, glued together along their $s$-panel if the respective chambers are $s$-adjacent.\\
The most common example in the literature apart from the standard realization of a building is the Davis realization of $\Delta$, where $Z = |K(\cS)|$ is the
geometric realization of the flag complex on the set $\cS$ of spherical subsets of $S$.
\subsection{Cellulation of $Z(\Delta)$}
Our goal is to choose $Z$ such that $Z(\Delta)$ is the correct dimension to apply Gandini's theorem, to the complex $X = Z(\Delta_+)\times Z(\Delta_-)$. In order
to discuss the cell stabilizers of $G$ acting on $X$, we need to discuss how $G$ acts on $X$ as well as the cellulation of $X$ as a CW-complex.\\
For our purposes, $Z$ will always be a simplicial complex and hence also a CW-complex where the cells are the closed simplices. Given a cell $\sigma\in Z$,
we define the cell $[C,\sigma]:= \bigcup_{z\in\sigma} [C,z]$ in $Z(\Delta)$ for all $C\in \sC$. We develop an equivalence between cells similar to that between points:
$[C,\sigma] = [C', \sigma']$ if and only if $\bigcup_{z\in\sigma} [C,z] = \bigcup_{z'\in\sigma '}[C',z']$ if and only if $\delta(C,C')\in S_z$ for all
$z\in \sigma$ and $\sigma = \sigma '$, where the last equivalence follows from equivalence of points from each union. Now we define
$S_{\sigma} := \{s\in S|\sigma\subset Z_s\} = \bigcap_{z\in\sigma} S_z$. Then we can reformulate equivalence between cells by saying
$[C,\sigma] = [C', \sigma ']$ if and only if $\sigma = \sigma'$ and $\delta(C,C')\in \langle S_{\sigma}\rangle$.
This establishes a CW-complex structure on $Z(\Delta)$ and hence on the product $X$.
\\
Now we establish the $G$-action on $X$. We know that $G$ acts strongly transitively on $(\sC_+, \sC_-, \delta^*)$ and hence acts on $\Delta$.
We then define the action of $G$ on $Z(\Delta)$ by
$$
g.[C,z] = [gC,z]
$$
for any $g\in G, C\in\Delta, z\in Z$. This action is well-defined since $G$ preserves $\delta$. This action naturally extends to an action on the cells.
We now show that this action is \textit{cellular}, i.e. if an element $g\in G$ stabilizes a cell, then it also fixes the cell pointwise.
Let $[C,\sigma]$ be a cell and let $g\in G_{[C,\sigma]}$. We want to show that the cell is fixed pointwise by $g$. Since $[gC, \sigma] = [C,\sigma]$,
we know that $\delta(gC,C)\in \langle S_{\sigma}\rangle$, where $S_{\sigma} = \{s\in S|\sigma\subset Z_s\}$. If $z\in\sigma$, then $S_{\sigma}\subset S_z$,
so $\delta(gC, C)\in\langle S_z\rangle$ and hence $[gC,z]=[C,z]$. Thus $G$ acts cellularly on $Z(\Delta)$. This induces a cellular action of $G$ on $X$ by
$$
g.([C,z],[C',z']) = ([gC,z],[gC',z'])
$$
for any $g\in G$, $C\in\sC_+$, $C'\in \sC_-$, and $z,z'\in Z$.

\section{Cell Stabilizers}\label{sec: cell stabilizers}
The goal of this section is to determine the conditions necessary to ensure that we can apply Gandini's theorem.
That is, we want a group $G$ acting cellularly on a contractible space with finite cell stabilizers such that $G$
contains finite subgroups of unbounded order. In particular, we will impose certain conditions on the thick twin building that will yield the desired properties.\\

We first give the setup. Let $(\Delta_+, \Delta_-, \delta^*)$ be a thick twin building of type $(W,S)$ where $S = \{s_i|1\leq i\leq n\}$ and $W$ is of
infinite order. We also have a set of parameters $(q_i)_{1\leq i\leq n}$ with $q_i\in \N$, $q_i\geq 2$ such that for any $s_i$-panel $\cP$, the number of chambers
containing $\cP$ is $|\sC(\cP)| = q_i + 1$ for $1\leq i\leq n$. We then define $\qmin := \text{min } (q_i)$ and $\qmax := \text{max } (q_i)$.\\

Now suppose that $G$ is a group acting strongly transitively on the thick twin building and fix  a pair of opposite chambers $C_+\in\Delta_+$ and $C_-\in\Delta_-$.
Let $\Sigma:=\Sigma\{C_+, C_-\}$ be the fundamental twin apartment defined by this pair of opposite chambers, and set $B_{\pm} := G_{C_{\pm}}$ and $N :=G_{\Sigma}$
to be the stabilizers in $G$ of the two fundamental chambers and fundamental twin apartment. Then $(B_+, B_-, N)$ is a saturated twin BN-pair in $G$. That is,
we know $T:= N\cap B_{\pm} = B_+\cap B_-$. We additionally require two finiteness assumptions:

\begin{enumerate}
\item The parameter $q_i$ is finite for all $1\leq i\leq n$.
\item The subgroup $T = B_+\cap B_-$ is finite.
\end{enumerate}
The main example of such a group is $G = \cG(\FF_q)$, a Kac-Moody group over a finite field. In this case, $\cG(\FF_q)$ has a family of root groups $(U_{\alpha})_{\alpha\in\Phi}$ where $|U_{\alpha}| = q$ since $U_{\alpha}\cong (\FF_q, +)$. Hence we set all parameters $q_i$ equal to $q$. Furthermore, $T\cong (\FF_q^*)^k$ for some $k\in \N$, and hence $T$ is finite.\\

A general point in $X = Z(\Delta_+)\times Z(\Delta_-)$ is of the form $([C,z],[C',z'])$, where $C\in\sC_+$, $C'\in\sC_-$, and $z,z'\in Z$. Since $G$ acts strongly transitively on the twin building $\sC$, there is some $g\in G$ such that $(gC, gC') = (C_+, wC_-)$, where $w = \delta^*(C,C')$ by Lemma 6.70 in \cite{AB08}. Then the stabilizer of the point $([C_+,z], [wC_-,z'])$ is conjugate to the stabilizer of the original point. Since we only wish to show that the stabilizers are finite, it suffices to look only at points of the latter form, which will make computations easier.\\

Recall that $[C_+, z] = [D, z'']$ if and only $z=z''$ and $\delta(C_+, D) \in \langle S_z\rangle$. Moreover, $G_{C_+} = B_+$, so $G_{[C_+,z]} = B_+\langle S_z\rangle B_+$. Similarly, since $G_{wC_-} = wG_{C_-}w^{-1} = wB_-w^{-1}$, we have $G_{[wC_-,z']} = wB_-\langle S_{z'}\rangle B_-w^{-1}$. Therefore the stabilizer of the point $([C_+,z],[wC_-,z'])$ is the intersection $B_+\langle S_z\rangle B_+\cap wB_-\langle S_{z'}\rangle B_-w^{-1}$.\\

First we will study the case when $S_z = \emptyset$ and $S_{z'} = \emptyset$; that is, when $z$ and $z'$ do not lie in any panel. The stabilizers in this case are the subgroups $B_+\cap wB_-w^{-1}$. These will provide finite subgroups of $G$ of unbounded order.\\

The first lemma toward this result gives an upper and lower bound on the number of chambers in the ``$w$-sphere" of a chamber $C$ in one half of the building.
We define the ``$w$-sphere" of a chamber $C$ in $\Delta$ to be $\sC_w(C) := \{D\in\Delta| \delta(C,D) = w\}$.
\begin{lemma}\label{lem: cardinality of w-sphere}
Suppose $w\in W$ with reduced decomposition $w = s_{i_1}\cdots s_{i_{\ell}}$ where $\ell = \ell(w)$ and $1\leq i_1,\ldots, i_{\ell}\leq n$.  Then
$$
\qmin^{\ell(w)}\leq |\sC_w(C)| = q_{i_1}\cdots q_{i_{\ell}}\leq \qmax^{\ell(w)}.
$$
\end{lemma}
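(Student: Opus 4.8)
The plan is to prove the equality $|\sC_w(C)| = q_{i_1}\cdots q_{i_\ell}$ by induction on $\ell = \ell(w)$, after which the two inequalities follow immediately from the definitions of $\qmin$ and $\qmax$ (each factor $q_{i_j}$ lies between these bounds, and there are exactly $\ell$ factors). The base case $\ell = 0$ is trivial: $\sC_e(C) = \{C\}$, so the cardinality is $1$, the empty product.

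For the inductive step, write $w = w's_{i_\ell}$ with $\ell(w') = \ell - 1$ and $w' = s_{i_1}\cdots s_{i_{\ell-1}}$ reduced. The key structural fact I would invoke is the standard ``gate'' or ``projection'' property of buildings relating $\sC_w(C)$ to $\sC_{w'}(C)$: since $\ell(w's_{i_\ell}) > \ell(w')$, every chamber $D$ with $\delta(C,D) = w$ is $s_{i_\ell}$-adjacent (and not equal) to a unique chamber $D'$ with $\delta(C,D') = w'$, namely $D' = \mathrm{proj}_{\sC_{w'}(C)}$-type projection along the last panel; conversely, for each $D' \in \sC_{w'}(C)$, the chambers $D$ in the $s_{i_\ell}$-panel of $D'$ with $\delta(C,D) = w$ are exactly the $q_{i_\ell}$ chambers of that panel other than $D'$ itself (this uses that $\delta(C,D') = w'$ forces $\delta(C,D) \in \{w', w's_{i_\ell}\}$, with the shorter value $w'$ attained only by $D'$). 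This gives a partition of $\sC_w(C)$ into $|\sC_{w'}(C)|$ blocks, each of size $q_{i_\ell}$, so $|\sC_w(C)| = q_{i_\ell}\cdot|\sC_{w'}(C)| = q_{i_\ell}\cdot q_{i_1}\cdots q_{i_{\ell-1}}$ by the inductive hypothesis. A small but important point to check is that the count $q_{i_\ell}$ does not depend on which $s_{i_\ell}$-panel we are in: this is exactly the hypothesis that every $s_i$-panel contains $q_i + 1$ chambers, which holds since $\Delta$ is a building with the given parameters.

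The main obstacle is really just bookkeeping the adjacency/projection argument carefully in the language of Weyl distances: one must be sure that the decomposition $\delta(C,D) \in \{w', w's_{i_\ell}\}$ for $D$ in the last panel is correct, that exactly one chamber in that panel realizes the shorter element, and that the assignment $D \mapsto D'$ is well-defined and surjective onto $\sC_{w'}(C)$. All of this is contained in the building axioms (specifically the axiom governing $\delta(C,D)$ when $C$ and $D$ lie in $s$-adjacent or equal chambers), so no genuinely new idea is needed; an alternative is to cite directly the counting statement for buildings of given thickness (e.g. from \cite{AB08}), which packages precisely this induction. Once the equality is established, the sandwiching $\qmin^{\ell(w)} \leq q_{i_1}\cdots q_{i_\ell} \leq \qmax^{\ell(w)}$ is immediate.
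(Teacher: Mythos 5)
Your proposal is correct and follows essentially the same route as the paper: an induction on $\ell(w)$ whose inductive step partitions $\sC_w(C)$ into blocks of size $q_{i_\ell}$ indexed by $\sC_{w'}(C)$ (the paper states this as the disjoint union $\sC_{ws}(C) = \coprod_{D\in\sC_w(C)}\sC_s(D)$, which is your projection-fiber decomposition read in the other direction). The only cosmetic difference is that you justify disjointness via uniqueness of the gate/projection while the paper argues it directly from $\ell(ws)=\ell(w)+1$; both are standard and equivalent.
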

\begin{proof}
We proceed by induction. If $\ell = 1$, then $w = s_{i_1}$. By assumption, the $s_{i_1}$-panel is contained in $q_{i_1}$ chambers distinct from $C$, so the statement is easily seen to be true.\\
\textbf{Claim:} Whenever $s = s_{i}\in S$ and $\ell(ws) = \ell(w)+1$, then
$$
\sC_{ws}(C) = \coprod_{D\in\sC_w(C)} \sC_s(D).
$$
\textbf{Proof of Claim:} If $E\in \sC_{ws}(C)$, then $\delta(C,E) = ws$. Therefore, there exists a minimal gallery $C = C_0, C_1,\ldots, C_k = E$ of type $(s_{i_1},\cdots, s_{i_{\ell}},s)$ where $E\in\sC_s(C_{k-1})$ and $C_{k-1}\in\sC_w(C)$. This proves the $\subset$ inclusion.\\
On the other hand, if $E\in\sC_s(D)$ for some $D\in\sC_w(C)$, then $E\in \sC_{ws}(C)$ since $\ell(ws) = \ell(w) +1$. Hence $\supset$ holds as well.\\
It remains to show that the union is disjoint. Suppose that $D, D'\in\sC_w(C)$ and that $\sC_s(D)\cap\sC_s(D')\neq\emptyset$. Then there is some chamber $s$-adjacent to both $D$ and $D'$; hence $D$ and $D'$ share the same $s$-panel. If $D\neq D'$, then $\delta(D,D') = s$, so $D'\in\sC_s(D)$. Since $\ell(ws)=\ell(w)+1$, $D'\in \sC_{ws}(C)$, a contradiction.\\

Note that, with $s = s_i$, $|C_s(D)| = q_i$ for all $D\in\sC_w(C)$. The claim implies $|\sC_{ws}(C)| = |\sC_w(C)||\sC_s(D)|$ for any $D\in\sC_w(C)$. Therefore, for $\ell>1$, we have $|\sC_{ws_{i_{\ell}}}(C)| = q_{i_1}\cdots q_{i_{\ell - 1}}$ by the induction hypothesis. We now note that $\ell(w) = \ell(ws_{i_{\ell}}s_{i_{\ell}})=\ell(ws_{i_{\ell}})+1$, and $|\sC_{s_{i_{\ell}}}(D)| = q_{i_{\ell}}$ for any $D$ with $\delta(C,D) = ws_{i_{\ell}}$. The claim then gives $|\sC_w(C)| = |\sC_{ws_{i_{\ell}}}(C)||\sC_{s_{i_{\ell}}}(D)| = q_{i_1}\cdots q_{i_{\ell}}$ for any $D$ with $\delta(C,D) = ws_{i_{\ell}}$. Clearly $\qmin^{\ell(w)}\leq q_{i_1}\cdots q_{i_{\ell}}\leq \qmin^{\ell(w)}$, so the statement is true.
\end{proof}

\begin{lemma}\label{lem: transitive on w-sphere}
The group $B_+\cap wB_-w^{-1} = G_{C_+}\cap G_{wC_-}$ acts transitively on $\sC_{w^{-1}}(wC_-)$, the $w^{-1}$-sphere about $wC_-$ in $\sC_-$.
\end{lemma}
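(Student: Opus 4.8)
The plan is to show that the orbit of $C_-$ under $B_+\cap wB_-w^{-1}$ is all of $\sC_{w^{-1}}(wC_-)$; since $C_-$ itself lies in this sphere, transitivity of the action will follow. First I would record that $C_-\in\sC_{w^{-1}}(wC_-)$: since $wC_-\in\Sigma_-$ and $C_+$ is opposite $C_-$ inside the fundamental twin apartment $\Sigma=\Sigma\{C_+,C_-\}$, the twin-apartment relation $\delta(C_-,wC_-)=\delta^*(C_+,wC_-)$ \cite{AB08}, together with $\delta^*(C_+,wC_-)=w$, gives $\delta(C_-,wC_-)=w$, hence $\delta(wC_-,C_-)=w^{-1}$.

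Next I would show that every $D\in\sC_{w^{-1}}(wC_-)$ is opposite $C_+$, that is, $\delta^*(C_+,D)=1_W$. Fix a minimal gallery $wC_-=E_0,E_1,\ldots,E_\ell=D$ in $\Delta_-$; its type $(t_1,\ldots,t_\ell)$ is a reduced word for $w^{-1}$, so $t_1\cdots t_\ell=w^{-1}$ with $\ell=\ell(w)$. Starting from $\delta^*(E_0,C_+)=\delta^*(wC_-,C_+)=w^{-1}$ (axiom (Tw1) of \cite{AB08}) and applying axiom (Tw2) to move the first coordinate one edge of the gallery at a time — at the $j$-th step the current value is the reduced word $t_{j+1}\cdots t_\ell$, and left multiplication by $t_{j+1}$ decreases its length, which is exactly the hypothesis of (Tw2) — an easy induction yields $\delta^*(E_j,C_+)=t_{j+1}\cdots t_\ell$ for $0\le j\le\ell$. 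Taking $j=\ell$ gives $\delta^*(D,C_+)=1_W$ (an empty product), hence $\delta^*(C_+,D)=1_W$.

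Then I would fix $D\in\sC_{w^{-1}}(wC_-)$ and set $\Sigma'=\Sigma\{C_+,D\}$, the twin apartment determined by the opposite pair $(C_+,D)$. The crucial point is that $wC_-$ lies in $\Sigma'$ as well: the negative half of $\Sigma'$ consists precisely of the chambers $x\in\Delta_-$ with $\delta^*(C_+,x)=\delta(D,x)$ \cite{AB08}, and $\delta^*(C_+,wC_-)=w=(w^{-1})^{-1}=\delta(wC_-,D)^{-1}=\delta(D,wC_-)$. Hence $C_+$, $D$, and $wC_-$ all lie in the single twin apartment $\Sigma'$. By strong transitivity of the $G$-action on the twin building, applied to the pairs $(\Sigma,wC_-)$ and $(\Sigma',wC_-)$, there is $g\in G$ with $g\Sigma=\Sigma'$ and $g(wC_-)=wC_-$. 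In any twin apartment containing $wC_-$, the map $x\mapsto\delta^*(x,wC_-)$ is a bijection from the positive half onto $W$, so there is a unique chamber of the positive half at codistance $w$ from $wC_-$; for both $\Sigma$ and $\Sigma'$ this chamber is $C_+$, and since $g$ maps the positive half of $\Sigma$ onto that of $\Sigma'$ while fixing $wC_-$, it follows that $gC_+=C_+$. Likewise, the chamber of the negative half opposite $C_+$ is unique in each of $\Sigma$ and $\Sigma'$, equal to $C_-$ in $\Sigma$ and to $D$ in $\Sigma'$, so $gC_-=D$. Therefore $g\in G_{C_+}\cap G_{wC_-}=B_+\cap wB_-w^{-1}$ carries $C_-$ to $D$; as $D$ was an arbitrary chamber of $\sC_{w^{-1}}(wC_-)$, the orbit of $C_-$ is the whole sphere.

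The step I expect to demand the most care is the third one — specifically, pinning down the twin apartment $\Sigma\{C_+,D\}$ and checking that $wC_-$ belongs to it. This rests on the precise description of the twin apartment attached to a pair of opposite chambers and on the fact that this apartment lies in the apartment system with respect to which $G$ acts strongly transitively, which is automatic for the twin building of a Kac-Moody group, where every twin apartment is a $G$-translate of the fundamental one. Granting this, everything else reduces to routine applications of the twin building axioms and to the uniqueness of distinguished chambers inside apartments.
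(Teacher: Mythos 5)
Your proof is correct and takes essentially the same route as the paper: both arguments hinge on showing that every chamber of $\sC_{w^{-1}}(wC_-)$ is opposite $C_+$ and that $wC_-$ lies in the twin apartment $\Sigma\{C_+,D\}$, and then use the uniqueness of distinguished chambers in a twin apartment to produce an element of $G_{C_+}\cap G_{wC_-}$ carrying $C_-$ to $D$. The only cosmetic differences are that you re-derive the opposition statement from the axioms (Tw1)/(Tw2) where the paper cites Corollary 5.141(1) of \cite{AB08}, and you obtain the group element from strong transitivity on pairs (twin apartment, chamber) rather than from the transitivity of $B_+$ on $C_+^{op}$.
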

\begin{proof}
First we need to show that this group actually acts on $\sC_{w^{-1}}(wC_-)$. Clearly $wB_-w^{-1}$ acts on $\sC_{w^{-1}}(wC_-)$ since it stabilizes $wC_-$ and acts by isometries on $\sC_-$. This action restricts to an action of the subgroup $B_+\cap wB_-w^{-1}$ as well. Now we must show that this action is transitive. Let $C'_-\in\sC_{w^{-1}}(wC_-)$; then $\delta_-(wC_-, C'_-) = w^{-1}$, so $\delta_-(C'_-, wC_-) = w = \delta^*(C_+, wC_-)$. By Corollary 5.141(1) in \cite{AB08}, we have $C'_- \op C_+$. Since $C'_-$ was arbitrary in $\sC_{w^{-1}}(wC_-)$, it follows that $\sC_{w^{-1}}(wC_-)\subset C_+^{op} := \{D\in\sC_-| \delta^*(C_+, D) = 1\}$.\\
Since $G$ acts strongly transitively on the twin building, $B_+$ acts transitively on $C_+^{op}$ by Lemma 6.70(ii) in \cite{AB08}. Now, given any $C'_-\in \sC_{w^{-1}}(wC_-)$, there exists some $b_+\in B_+$ such that $b_+C_- = C'_-$. We want to show that $b_+\in wB_-w^{-1}$ as well, which will prove transitivity.\\
Consider the twin apartment $\Sigma = \Sigma\{C_+, C_-\}$, which also contains $wC_-$. Then $b_+\Sigma = \Sigma\{C_+, C'_-\}$, which contains the chamber $b_+wC_-$. Since $\delta^*(C_+, wC_-) = w = \delta_-(C'_-, wC_-)$, $wC_-\in\Sigma\{C_+, C'_-\}$ by definition of this twin apartment. Also note that $\delta^*(C_+, wC_-) = w = \delta^*(C_+, b_+wC_-)$ since $b_+$ acts as an isometry. Hence $wC_- = b_+wC_-$ by uniqueness of chambers codistance $w$ from $C_+$ in a twin apartment. Thus $b_+\in B_+\cap wB_-w^{-1}$.
\end{proof}

Now we are ready to show that the groups $B_+\cap wB_-w^{-1}$, for $w\in W$, provide finite subgroups of $G$ of unbounded order.
\begin{proposition}\label{prop: bounds on finite subgroups}
$|T|\qmin^{\ell(w)}\leq |B_+\cap wB_-w^{-1}|\leq |T|\qmax^{\ell(w)}$ for any $w\in W$.
\end{proposition}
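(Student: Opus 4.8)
The plan is to apply the orbit--stabilizer theorem to the action of $H := B_+\cap wB_-w^{-1}$ on the sphere $\sC_{w^{-1}}(wC_-)$, using the transitivity established in Lemma~\ref{lem: transitive on w-sphere}. Fix a reduced decomposition $w = s_{i_1}\cdots s_{i_\ell}$ with $\ell = \ell(w)$, so that $w^{-1} = s_{i_\ell}\cdots s_{i_1}$ is a reduced decomposition of $w^{-1}$ with $\ell(w^{-1}) = \ell$. Applying Lemma~\ref{lem: cardinality of w-sphere} to $w^{-1}$ (the product of the parameters is unchanged under reordering of its factors) gives $|\sC_{w^{-1}}(wC_-)| = q_{i_1}\cdots q_{i_\ell}$, together with the bounds $\qmin^{\ell}\le q_{i_1}\cdots q_{i_\ell}\le\qmax^{\ell}$.

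Next I would pin down a convenient point of this sphere and compute its $H$-stabilizer. Write $wC_- = n_wC_-$ for a lift $n_w\in N$ of $w$. Since $C_+$ and $C_-$ are opposite, the codistance from $C_+$ and the $\delta_-$-distance from $C_-$ agree on the half-apartment $\Sigma_-$, so $\delta_-(C_-,wC_-) = \delta^*(C_+,wC_-) = w$, whence $\delta_-(wC_-,C_-) = w^{-1}$ and $C_-\in\sC_{w^{-1}}(wC_-)$. By Lemma~\ref{lem: transitive on w-sphere} the $H$-orbit of $C_-$ is therefore all of $\sC_{w^{-1}}(wC_-)$. Since $G_{C_-} = B_-$, the stabilizer of $C_-$ in $H$ is $B_+\cap B_-\cap wB_-w^{-1} = T\cap wB_-w^{-1}$. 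The one genuine point to check is that $T\subseteq wB_-w^{-1}$: as $T = B_+\cap B_-$ is normal in $N$, we get $T = n_wTn_w^{-1}\subseteq n_wB_-n_w^{-1} = wB_-w^{-1}$. Hence the $H$-stabilizer of $C_-$ is exactly $T$.

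Orbit--stabilizer now yields $|B_+\cap wB_-w^{-1}| = |H| = |\sC_{w^{-1}}(wC_-)|\cdot|T| = |T|\,q_{i_1}\cdots q_{i_\ell}$; in particular $H$ is finite, since $T$ is finite and each $q_i$ is finite by the standing assumptions. Combining with the bounds on $q_{i_1}\cdots q_{i_\ell}$ from Lemma~\ref{lem: cardinality of w-sphere} gives $|T|\qmin^{\ell(w)}\le|B_+\cap wB_-w^{-1}|\le|T|\qmax^{\ell(w)}$, as required.

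I expect no serious obstacle: the substantive content is already packaged in Lemmas~\ref{lem: cardinality of w-sphere} and~\ref{lem: transitive on w-sphere}, and what remains is the bookkeeping above. The only place demanding care is keeping the meaning of the symbol $wC_-$ consistent throughout — namely, that it is the image of $C_-$ under a lift of $w$ to $N$ — so that $\delta_-(wC_-,C_-) = w^{-1}$, $G_{wC_-} = wB_-w^{-1}$, and the normality of $T$ in $N$ can all be invoked simultaneously.
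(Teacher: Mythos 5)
Your proof is correct and follows essentially the same route as the paper: orbit--stabilizer applied to the action of $B_+\cap wB_-w^{-1}$ on $\sC_{w^{-1}}(wC_-)$, with transitivity from Lemma~\ref{lem: transitive on w-sphere}, the stabilizer of $C_-$ identified as $T$ via normality of $T$ in $N$, and the count from Lemma~\ref{lem: cardinality of w-sphere}. Your extra checks (that $C_-$ indeed lies on the sphere, and that the sphere count applies to $w^{-1}$) are details the paper leaves implicit, handled correctly.
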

\begin{proof}
Here we will make use of the Orbit-Stabilizer Theorem. Consider the action of $B_+\cap wB_-w^{-1}$ on $\sC_{w^{-1}}(wC_-)$ and, in particular, the stabilizer of the chamber $C_-$ in $B_+\cap wB_-w^{-1}$. The stabilizer is $B_-\cap B_+\cap wB_-w^{-1} = T\cap wB_-w^{-1}$, using the fact that $T = B_+\cap B_-$. Since $T\normal N$, $wTw^{-1} = T$. Therefore, since $T\leq B_-$ as well, we have $T\leq wB_- w^{-1}$. Hence the stabilizer of $C_-$ in $B_+\cap wB_-w^{-1}$ is just $T$. Since the action of $B_+\cap wB_-w^{-1}$ is transitive by Lemma \ref{lem: transitive on w-sphere}, the orbit is all of $\sC_{w^{-1}}(wC_-)$. Thus, we obtain $[B_+\cap wB_-w^{-1}: T] = |\sC_{w^{-1}}(wC_-)|$, where $T$ and $\sC_{w^{-1}}(wC_-)$ are finite. Hence $|B_+\cap wB_-w^{-1}| = |T||\sC_{w^{-1}}(wC_-)|$, and the result follows from Lemma \ref{lem: cardinality of w-sphere}.
\end{proof}

\begin{corollary}\label{cor: finite subgroups unbounded order}
$G$ has finite subgroups of unbounded order if $W$ is infinite.
\end{corollary}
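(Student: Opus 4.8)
The plan is to exhibit the conjugates-intersection subgroups $B_+\cap wB_-w^{-1}$, as $w$ ranges over $W$, as the desired family of finite subgroups of unbounded order. First I would record that each such subgroup is genuinely finite: by Proposition \ref{prop: bounds on finite subgroups} its order is at most $|T|\qmax^{\ell(w)}$, and both $|T|$ and $\qmax = \mathrm{max}\,(q_i)$ are finite by the two standing finiteness assumptions. So each $B_+\cap wB_-w^{-1}$ is a finite subgroup of $G$.

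Next I would argue that these orders cannot be bounded. The key elementary point is that an infinite Coxeter group $W$, being finitely generated (since $|S| = n < \infty$) and having a locally finite Cayley graph $\Cay(W,S)$, contains elements of arbitrarily large word length: if $\ell(\cdot)$ were bounded by some $N$ on all of $W$, then $W$ would be contained in the ball of radius $N$ about $1$ in $\Cay(W,S)$, which is finite, contradicting $|W| = \infty$. Hence for every $N$ we may choose $w = w_N\in W$ with $\ell(w)\geq N$.

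Finally, apply the lower bound of Proposition \ref{prop: bounds on finite subgroups} to such $w$:
$$
|B_+\cap wB_-w^{-1}|\ \geq\ |T|\,\qmin^{\ell(w)}\ \geq\ \qmin^{N}\ \geq\ 2^{N},
$$
using $|T|\geq 1$ and $\qmin = \mathrm{min}\,(q_i)\geq 2$. Letting $N\to\infty$ shows that the finite subgroups $B_+\cap wB_-w^{-1}$ of $G$ have unbounded order, which is exactly the assertion.

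There is no real obstacle here: the statement is an immediate consequence of Proposition \ref{prop: bounds on finite subgroups} once one observes that word length is unbounded on an infinite finitely generated group, and that observation follows at once from local finiteness of the Cayley graph. The only mild care needed is to invoke the upper bound of Proposition \ref{prop: bounds on finite subgroups} (together with finiteness of $T$ and of the $q_i$) to know that the subgroups we produce are finite in the first place.
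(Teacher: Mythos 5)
Your proposal is correct and follows essentially the same route as the paper: both deduce the result from the lower bound $|B_+\cap wB_-w^{-1}|\geq |T|\qmin^{\ell(w)}$ in Proposition \ref{prop: bounds on finite subgroups}, letting $\ell(w)\to\infty$. You merely add the (correct, and implicit in the paper) justifications that the subgroups are finite via the upper bound and that an infinite finitely generated Coxeter group has elements of arbitrarily large length.
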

\begin{proof}
If $W$ is infinite, then $|T|\qmin^{\ell(w)}$ goes to infinity as $\ell(w)$ goes to infinity. By Proposition \ref{prop: bounds on finite subgroups}, $|B_+\cap wB_-w^{-1}|\geq |T|\qmin^{\ell(w)}$, so the order of $B_+\cap wB_-w^{-1}$ can be made arbitrarily large.
\end{proof}

Now that we have shown that $G$ has finite subgroups of unbounded order, it remains to show that all cell stabilizers are finite. Recall that, due to conjugacy, these stabilizers are of the form $B_+W_IB_+\cap wB_-W_JB_-w^{-1}$ where $I, J\subset S$ and $w\in W$. We have already shown that these are finite if $I, J = \emptyset$. We show that these are finite subgroups of $G$ if $I$, $J$ are spherical subsets of $S$.

\begin{lemma}\label{lem: finite index in parabolic}
Let $P_J = B_{\pm}W_JB_{\pm}$ be a standard parabolic subgroup. Then $[P_J:B_{\pm}]<\infty$ if and only if $|W_J|<\infty$.
\end{lemma}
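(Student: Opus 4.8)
The plan is to combine the Bruhat decomposition coming from the BN-pair with the chamber count of Lemma \ref{lem: cardinality of w-sphere}. I treat $P_J = B_+W_JB_+$; the case of $B_-$ is identical by symmetry. Since $G$ acts strongly transitively on the twin building it is transitive on $\sC_+$, and $B_+ = G_{C_+}$, so $gB_+ \mapsto gC_+$ is a $G$-equivariant bijection from the set of left cosets $G/B_+$ onto $\sC_+$. Under this bijection the left cosets of $B_+$ lying in a single Bruhat double coset $B_+wB_+$ (for $w \in W$) correspond exactly to the chambers $D$ with $\delta(C_+,D) = w$: if $g = b_1 n b_2$ with $b_1, b_2 \in B_+$ and $n \in N$ mapping to $w$, then $gC_+ = b_1 n C_+$ and $\delta(C_+, b_1 nC_+) = \delta(C_+, nC_+) = w$, and conversely every chamber at Weyl distance $w$ from $C_+$ arises in this way (this is the usual fact that the Bruhat decomposition detects the $W$-distance). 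Hence $B_+wB_+$ is a disjoint union of exactly $|\sC_w(C_+)|$ left cosets of $B_+$.

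Next I would use the standard fact, part of the axiomatics of a BN-pair (see \cite{AB08}), that $P_J = B_+W_JB_+ = \coprod_{w\in W_J} B_+wB_+$. Combining this with the previous paragraph yields
$$
[P_J : B_+] \;=\; \sum_{w\in W_J} |\sC_w(C_+)|.
$$
By Lemma \ref{lem: cardinality of w-sphere}, each summand satisfies $1 \le \qmin^{\ell(w)} \le |\sC_w(C_+)| \le \qmax^{\ell(w)} < \infty$, the finiteness using that every $q_i$ is finite by assumption.

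The two implications now fall out. If $|W_J| < \infty$, the sum above is a finite sum of finite terms, so $[P_J : B_+] < \infty$. Conversely, if $|W_J| = \infty$, the sum has infinitely many terms, each at least $1$, so $[P_J : B_+] = \infty$; equivalently, $[P_J : B_+] < \infty$ forces $|W_J| < \infty$. Replacing $B_+$ by $B_-$ throughout gives the statement for $P_J = B_-W_JB_-$.

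I do not expect a serious obstacle: the only non-bookkeeping ingredient is the identification of the left $B_+$-cosets inside $B_+wB_+$ with $\sC_w(C_+)$, which is the standard translation between a thick, strongly transitive building and its BN-pair and can be either cited from \cite{AB08} or checked directly as indicated above. The rest is just the Bruhat decomposition of $P_J$ together with the bounds already established in Lemma \ref{lem: cardinality of w-sphere}.
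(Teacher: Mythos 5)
Your proof is correct and is essentially the paper's argument: both directions come down to identifying the left $B_{\pm}$-cosets in $P_J$ with the chambers of the $J$-residue $R_J(C_{\pm}) = \coprod_{w\in W_J}\sC_w(C_{\pm})$, using Lemma \ref{lem: cardinality of w-sphere} for finiteness of each $w$-sphere and the Bruhat decomposition for the distinctness of the double cosets $B_{\pm}wB_{\pm}$. The only cosmetic difference is that the paper phrases the forward direction via the Orbit--Stabilizer theorem applied to the transitive action of $P_J$ on $R_J(C_{\pm})$, whereas you package both directions into the single formula $[P_J:B_{\pm}]=\sum_{w\in W_J}|\sC_w(C_{\pm})|$.
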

\begin{proof}
Suppose that $W_J$ is finite. We already assume that $q_i$ is finite for $1\leq i\leq n$. Then the $J$-residue containing $C_{\pm}$, $R_J(C_{\pm})$ is finite by Lemma \ref{lem: cardinality of w-sphere} since each $w$-sphere in $\sC_{\pm}$ is finite, and there are only finitely many to consider due to the assumption that $W_J$ is finite. \\
We know that any chamber in $R_J(C_{\pm})$ can be written as $gC_{\pm}$ with $g\in P_J$. Hence $P_J$ acts transitively on $R_J(C_{\pm})$, and the stabilizer of $C_{\pm}$ is $B_{\pm}$. The Orbit-Stabilizer Theorem then implies that $[P_J: B_{\pm}] = |R_J(C_{\pm})|<\infty$.\\
On the other hand, suppose $|W_J|$ is infinite. By the Bruhat decomposition in $G$, all double cosets $B_{\pm}wB_{\pm}$ are distinct for distinct $w\in W_J$. Therefore, there are infinitely many such double cosets and hence infinitely many left cosets in $P_J/B_{\pm}$. Thus $[P_J:B_{\pm}] = \infty$.
\end{proof}

\begin{lemma}\label{lem: finite cell stabilizers}
Let $P_I = B_+W_IB_+$ and $P_J = B_-W_JB_-$ where $W_I$ and $W_J$ are both spherical. Then
$$
[P_I\cap wP_Jw^{-1}: B_+\cap wB_-w^{-1}]<\infty
$$
and thus $P_I\cap wP_Jw^{-1}$ is a finite group.
\end{lemma}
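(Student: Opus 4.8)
The plan is to exhibit $B_+\cap wB_-w^{-1}$ as a finite-index subgroup of $P_I\cap wP_Jw^{-1}$ by passing through one intermediate subgroup and invoking twice the elementary fact that if $H\le G$ is any subgroup and $K\le G$ has finite index, then $[H:H\cap K]\le[G:K]<\infty$ (via the injection of $H/(H\cap K)$ into $G/K$). Since $B_+\cap wB_-w^{-1}$ is already known to be finite by Proposition \ref{prop: bounds on finite subgroups}, bounding this index will immediately give that $P_I\cap wP_Jw^{-1}$ is finite.

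First I would record the containments $B_+\cap wB_-w^{-1}\le B_+\cap wP_Jw^{-1}\le P_I\cap wP_Jw^{-1}$, which hold because $B_+\le P_I$ and, conjugating $B_-\le P_J$ by $w$, also $wB_-w^{-1}\le wP_Jw^{-1}$. Write $H:=P_I\cap wP_Jw^{-1}$, so that $B_+\cap wP_Jw^{-1}=H\cap B_+$. Applying the elementary fact inside $P_I$ with $K=B_+$, and using Lemma \ref{lem: finite index in parabolic} (which gives $[P_I:B_+]<\infty$ since $W_I$ is spherical), we get $[H:H\cap B_+]\le[P_I:B_+]<\infty$, i.e.\ $[P_I\cap wP_Jw^{-1}:B_+\cap wP_Jw^{-1}]<\infty$.

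Next I would bound $[B_+\cap wP_Jw^{-1}:B_+\cap wB_-w^{-1}]$. Here $B_+\cap wB_-w^{-1}=(B_+\cap wP_Jw^{-1})\cap wB_-w^{-1}$, and $[wP_Jw^{-1}:wB_-w^{-1}]=[P_J:B_-]<\infty$ by Lemma \ref{lem: finite index in parabolic} applied now with $W_J$ spherical (conjugation by $w$ being an automorphism, it preserves the index). So the same elementary fact, applied inside $wP_Jw^{-1}$ with $K=wB_-w^{-1}$, yields $[B_+\cap wP_Jw^{-1}:B_+\cap wB_-w^{-1}]\le[P_J:B_-]<\infty$. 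Multiplying the two indices through the tower $B_+\cap wB_-w^{-1}\le B_+\cap wP_Jw^{-1}\le P_I\cap wP_Jw^{-1}$ gives $[P_I\cap wP_Jw^{-1}:B_+\cap wB_-w^{-1}]<\infty$, and since $B_+\cap wB_-w^{-1}$ is finite by Proposition \ref{prop: bounds on finite subgroups}, a finite-index overgroup of it is finite, so $P_I\cap wP_Jw^{-1}$ is finite.

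The argument is essentially bookkeeping, so I do not expect a genuine obstacle; the only points requiring care are keeping the conjugations straight (that $wB_-w^{-1}\le wP_Jw^{-1}$ and $[wP_Jw^{-1}:wB_-w^{-1}]=[P_J:B_-]$) and remembering that it is the finiteness of $B_+\cap wB_-w^{-1}$ — the real input, coming from the orbit--stabilizer computation behind Proposition \ref{prop: bounds on finite subgroups} — that upgrades ``finite index'' to ``finite''. All of the substantive work has already been carried out in Lemmas \ref{lem: cardinality of w-sphere}, \ref{lem: transitive on w-sphere}, and \ref{lem: finite index in parabolic}.
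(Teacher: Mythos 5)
Your argument is correct and is essentially the paper's proof: both establish the bound $[P_I\cap wP_Jw^{-1}:B_+\cap wB_-w^{-1}]\le[P_I:B_+][P_J:B_-]$ using Lemma \ref{lem: finite index in parabolic} for the two finite indices and then conclude finiteness from Proposition \ref{prop: bounds on finite subgroups}. The only (cosmetic) difference is that the paper packages the index bound as a single orbit--stabilizer computation for the diagonal action of $P_I\cap wP_Jw^{-1}$ on $P_I/B_+\times wP_Jw^{-1}/wB_-w^{-1}$, whereas you unpack it into a two-step subgroup tower via the fact $[H:H\cap K]\le[G:K]$ --- the same counting in a different order.
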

\begin{proof}
We will utilize the Orbit-Stabilizer Theorem again. Consider the set $P_I/B_+ \times \; wP_Jw^{-1}/wB_-w^{-1}$.
There is a natural action of $P_I\cap wP_Jw^{-1}$ on this product by left multiplication. Now consider the element $(B_+, wB_-w^{-1})$. The stabilizer of this element is $B_+\cap wB_-w^{-1}$. Hence
$$
[P_I\cap wP_Jw^{-1}: B_+\cap wB_-w^{-1}] = |Orb(B_+, wB_-w^{-1})|\leq [P_I:B_+][wP_Jw^{-1}:wB_-w^{-1}] = [P_I:B_+][P_J:B_-],
$$
where equality would occur only if the action were transitive. Since both $[P_I:B_+]$ and $[P_J:B_-]$ are finite by Lemma \ref{lem: finite index in parabolic}, we have
$$
[P_I\cap wP_Jw^{-1}: B_+\cap wB_-w^{-1}]<\infty
$$
as desired. The fact that this group is then finite follows from Proposition \ref{prop: bounds on finite subgroups} which shows that $B_+\cap wB_-w^{-1}$ is finite.
\end{proof}


We are now in a position to reformulate Gandini's theorem for a
group $G$ acting strongly transitively on a thick twin building
$(\Delta_+, \Delta_-, \delta^*)$ with non-spherical apartments and
finite parameters $q_i$ such that $T = B_+ \cap B_-$ is finite:

\begin{proposition}\label{prop: Gandini-reformulated}
Suppose that $\Delta_{\pm}$ admits a $Z$-realization such that\\
(a) $S_z$ is spherical for any $z \in Z$ and\\
(b) $Z(\Delta_{\pm})$ is an $m$-dimensional contractible
CW-complex.\\
Then $B_{\pm}$ is not of type $FP_m$, and $G$ is not of type
$FP_{2m}$.
\end{proposition}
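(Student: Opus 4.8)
The plan is to apply Theorem \ref{thm: Gandini specific} twice: to the action of $B_{\pm}$ on the single realization $Z(\Delta_{\mp})$ for the first assertion, and to the action of $G$ on $X = Z(\Delta_+)\times Z(\Delta_-)$ for the second. In each case the three things to verify are that the complex in question is contractible of the stated dimension, that the action is cellular with finite cell stabilizers, and that the acting group contains finite subgroups of unbounded order; by now almost everything needed has been assembled in Sections 2 and \ref{sec: cell stabilizers}.

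For the first assertion I would consider $B_+ = G_{C_+}$ acting on $Z(\Delta_-)$, the case of $B_-$ on $Z(\Delta_+)$ being symmetric. This is the restriction of the cellular $G$-action constructed in Section 2, hence cellular, and $Z(\Delta_-)$ is an $m$-dimensional contractible CW-complex by hypothesis (b). Given any cell $[D,\sigma']$ of $Z(\Delta_-)$, strong transitivity of $G$ lets one choose $b\in B_+$ with $bD = wC_-$ for some $w\in W$; then, just as in the point computation preceding Lemma \ref{lem: cardinality of w-sphere}, $b(B_+\cap G_{[D,\sigma']})b^{-1} = B_+\cap G_{[wC_-,\sigma']} = B_+\cap wB_-W_{S_{\sigma'}}B_-w^{-1}$. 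Since $S_{\sigma'} = \bigcap_{z'\in\sigma'}S_{z'}\subseteq S_{z'}$ for any $z'\in\sigma'$ and each such $S_{z'}$ is spherical by (a), the set $S_{\sigma'}$ is spherical, so this group is finite by Lemma \ref{lem: finite cell stabilizers} (taken with $I=\emptyset$ and $J = S_{\sigma'}$, so $P_I = B_+$). Finally $B_+$ contains the subgroups $B_+\cap wB_-w^{-1}$, of order at least $|T|\qmin^{\ell(w)}$ by Proposition \ref{prop: bounds on finite subgroups}, hence of unbounded order since $W$ is infinite. Theorem \ref{thm: Gandini specific} with $n = m$ then gives that $B_+$, and likewise $B_-$, is not of type $\FP_m$.

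For the second assertion I would consider $G$ acting on $X = Z(\Delta_+)\times Z(\Delta_-)$, which is cellular by Section 2. The space $X$, with the product CW structure, is contractible as a product of the contractible complexes $Z(\Delta_{\pm})$, and it has dimension $2m$ because each factor carries an $m$-cell whose product is a $2m$-cell. A cell of $X$ is a product $[C,\sigma]\times[C',\sigma']$; moving $(C,C')$ to $(C_+,wC_-)$ with $w = \delta^*(C,C')$ by strong transitivity exactly as in Section \ref{sec: cell stabilizers}, its stabilizer becomes $B_+W_{S_\sigma}B_+\cap wB_-W_{S_{\sigma'}}B_-w^{-1} = P_I\cap wP_Jw^{-1}$ with $I = S_\sigma$ and $J = S_{\sigma'}$, both spherical by the observation above. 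Hence every cell stabilizer is finite by Lemma \ref{lem: finite cell stabilizers}, and $G$ contains finite subgroups of unbounded order by Corollary \ref{cor: finite subgroups unbounded order}. Theorem \ref{thm: Gandini specific} with $n = 2m$ then shows that $G$ is not of type $\FP_{2m}$.

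Almost all of this is routine given Sections 2 and \ref{sec: cell stabilizers}, so the point I would be most careful about is the cell-level stabilizer bookkeeping: reducing an arbitrary cell's stabilizer to the standard form $P_I\cap wP_Jw^{-1}$ (and, for the first assertion, conjugating it into a $B_+\cap wB_-W_{S_{\sigma'}}B_-w^{-1}$) via strong transitivity and the description of equivalence of cells from Section 2, and verifying that $S_\sigma$ — not merely each $S_z$ — is spherical, which is exactly where hypothesis (a) is genuinely used, through the inclusion $S_\sigma\subseteq S_z$. The contractibility and dimension count for the product $X$ should then be immediate.
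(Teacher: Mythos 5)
Your proposal is correct and follows essentially the same route as the paper: apply Theorem \ref{thm: Gandini specific} to the $G$-action on $X = Z(\Delta_+)\times Z(\Delta_-)$ and to the $B_{\pm}$-action on $Z(\Delta_{\mp})$, using Lemma \ref{lem: finite cell stabilizers} with hypothesis (a) for finiteness of stabilizers and Proposition \ref{prop: bounds on finite subgroups} for unboundedness. You simply spell out the conjugation bookkeeping and the observation that $S_\sigma\subseteq S_z$ is spherical, which the paper leaves implicit.
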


\begin{proof}
For the group $G$, consider its action on the product $X =
Z(\Delta_+) \times Z(\Delta_-)$, which is a contractible
$2m$-dimensional CW-complex by assumption (b). The stabilizers of
elements of $X$ in $G$ are finite by Lemma \ref{lem: finite cell
stabilizers} and assumption (a). And the orders of these finite
stabilizers are unbounded by Proposition \ref{prop: bounds on finite
subgroups}. Hence Theorem \ref{thm: Gandini specific} implies that
$G$ is not of type $\FP_{2m}$.\\
Similarly, applying Theorem \ref{thm: Gandini specific} to the
action of $B_+$ on $Z(\Delta_-)$ (or of $B_-$ on $Z(\Delta_+)$), and
using again assumptions (a) and (b) together with Lemma \ref{lem:
finite cell stabilizers} and Proposition \ref{prop: bounds on finite
subgroups} yields that $B_+$ is not of type $\FP_m$.
\end{proof}

In Section 5 we will apply Proposition \ref{prop:
Gandini-reformulated} with $m = 1$, and our main task will consist
in verifying that our $Z$-realizations are trees. In the most
interesting case (when condition (A) in Section 5.1 is satisfied)
this verification is based on a technical lemma about Coxeter groups
which we will derive first in the next section.

\section{A lemma about Coxeter groups}
Let $W$ be a Coxeter group with generating set $S$. We will always
assume that $S$ is finite. The following lemma will be useful later
in proving that the complexes we construct are indeed trees.

\begin{lemma} \label{lemma for not reducing to trivial word}
Let $t_1,\ldots, t_m\in S$ such that $m(t_{i-1},t_i) = \infty$ for
$2\leq i\leq m$. Define $J:= S\backslash \{t_1,\ldots, t_m\}$ and
$\tilde{W_i}:= W_{J\cup\{t_i\}}\backslash W_J$. If $w\in
\tilde{W_1}\cdots \tilde{W_m}$, then any reduced decomposition of
$w$ is of the form $\tilde{w}_1\cdots\tilde{w}_m$ with
$\tilde{w}_i\in \tilde{W}_i$. In particular, $\ell(w)\geq m$.
\end{lemma}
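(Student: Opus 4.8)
The plan is to proceed by induction on $m$, peeling off the first factor. For the base case $m=1$, the claim is just that a reduced word for $w \in \tilde W_1 = W_{J\cup\{t_1\}}\setminus W_J$ lies in $\tilde W_1$ and uses at least one letter; since $w \notin W_J$, any reduced word must involve $t_1$, and it obviously lies in $W_{J\cup\{t_1\}}$, so $w \in \tilde W_1$. For the inductive step, write $w = \tilde w_1 \cdots \tilde w_m$ with $\tilde w_i \in \tilde W_i$, and set $w' = \tilde w_2 \cdots \tilde w_m \in \tilde W_2 \cdots \tilde W_m$. The key structural fact I want is a ``deletion-free'' multiplication statement: I claim $\ell(w) = \ell(\tilde w_1) + \ell(w')$ and, more precisely, that concatenating a reduced word for $\tilde w_1$ with a reduced word for $w'$ is already reduced, and that \emph{every} reduced word for $w$ arises this way. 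Granting this, the inductive hypothesis applied to $w'$ finishes the argument, since then the tail of any reduced word for $w$ is a reduced word for $w'$, hence of the form $\tilde w_2 \cdots \tilde w_m$, and the head is a reduced word for $\tilde w_1 \in \tilde W_1$.

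The heart of the matter is therefore the following: if $u \in \tilde W_1$ (so $u \in W_{J\cup\{t_1\}}$, $u \notin W_J$) and $v \in \tilde W_2 \cdots \tilde W_m$, then $\ell(uv) = \ell(u) + \ell(v)$ and no reduced word for $uv$ can ``mix'' the two parts. To control this I would use the standard machinery of Coxeter groups: a product $uv$ satisfies $\ell(uv) = \ell(u)+\ell(v)$ iff for every $s \in S$ with $\ell(su) < \ell(u)$ we do \emph{not} also... — more cleanly, I would invoke the criterion via parabolic subgroups and the structure of $\langle S\rangle$-cosets. Concretely: $u$ lies in the parabolic $W_{J \cup \{t_1\}}$, while $v$ lies in $W_{S \setminus \{t_1\}} = W_{J \cup \{t_2,\ldots,t_m\}}$ (since $v$ is a product of elements of $\tilde W_i \subset W_{J \cup \{t_i\}}$ for $i \geq 2$, none of which involves $t_1$ because $m(t_1,t_2)=\infty$ forces $t_1 \notin J \cup \{t_2\}$, etc.). The subtlety is that $u$ also has a $W_J$-part, so $u$ and $v$ both live in $W_{S\setminus\{t_1\}}$ only after we strip the $t_1$; I would instead argue directly that since $u \notin W_J$ and $m(t_1, t_2) = \infty$, the element $u$ is ``$t_1$-reduced on the right'' in a way compatible with $v$ starting (in reduced form) with a letter in $J \cup \{t_2\}$, and that $t_1 t_2$ having infinite order prevents any cancellation across the seam.

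The cleanest route for that seam analysis is to use the exchange/deletion condition together with the following observation: a reduced word $s_{i_1}\cdots s_{i_k}$ for $w$ that is \emph{not} a concatenation of the desired form must, at some position, have a letter from $\{t_2,\ldots,t_m\} \cup J$ appearing before $t_1$ has ``finished''; running the deletion condition would then produce a shorter expression, but one can check that the only way to shorten would be to cancel $t_1$ against another occurrence of $t_1$ or to use a braid relation $m(t_1,t_j)$ which is $\infty$ and hence unusable. I would formalize this by considering, among all reduced words for $w$, one that maximizes the length of its initial segment lying in $W_{J\cup\{t_1\}}$, and show that this initial segment must already spell out all of $\tilde w_1$ (i.e. contain the rightmost $t_1$), because the letter immediately after the initial segment, were it to allow further absorption, would force an infinite bond.

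\textbf{Main obstacle.} The genuinely delicate point is the seam argument — proving that a reduced word for $w$ cannot interleave letters destined for $\tilde w_1$ with letters destined for $\tilde w_2\cdots\tilde w_m$, equivalently that $\ell(\tilde w_1 \cdot (\tilde w_2\cdots\tilde w_m)) = \ell(\tilde w_1) + \ell(\tilde w_2\cdots\tilde w_m)$ with the decomposition unique at the level of reduced words. All the book-keeping with the sets $J \cup \{t_i\}$ is routine once this ``no cancellation across an $\infty$-edge, and $W_J$ on the left blocks nothing'' principle is established; I expect the proof to hinge on a careful application of the exchange condition (or, equivalently, on the fact that for $m(t_1,t_2)=\infty$ the subgroup $\langle t_1,t_2\rangle$ is infinite dihedral and $W_{J\cup\{t_1\}} \cap W_{J\cup\{t_2\}} = W_J$, the latter being a standard parabolic intersection fact).
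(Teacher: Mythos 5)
Your proposal has genuine gaps, and its central claim is false as stated. The key assertion of your inductive step --- that $\ell(\tilde w_1\cdot w')=\ell(\tilde w_1)+\ell(w')$ and that concatenating reduced words for $\tilde w_1$ and $w'=\tilde w_2\cdots\tilde w_m$ is already reduced --- fails: take $j\in J$, $\tilde w_1=t_1j\in\tilde W_1$ and $\tilde w_2=jt_2\in\tilde W_2$; then $\tilde w_1\tilde w_2=t_1t_2$ has length $2$, not $4$. The sets $\tilde W_i$ are set differences, not cosets, and the hypothesis only gives you \emph{some} decomposition $w=w_1\cdots w_m$ with $w_i\in\tilde W_i$, with no length-additivity; the conclusion of the lemma is that every reduced word admits \emph{a} (possibly different) splitting, so any argument that fixes the factors $\tilde w_1,w'$ in advance cannot work without a normalization step you have not supplied. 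A second error: you assert $v=\tilde w_2\cdots\tilde w_m\in W_{S\setminus\{t_1\}}$, but the $t_i$ need not be distinct --- only \emph{consecutive} ones are (forced by $m(t_{i-1},t_i)=\infty$). In the paper's main application the types of a circuit alternate between two generators, so $t_1=t_3=t_5=\cdots$ and the tail very much involves $t_1$. Finally, you explicitly flag the ``seam argument'' --- that reduced words cannot interleave across the junction --- as an unresolved obstacle; that is essentially the entire content of the lemma, so the proposal does not constitute a proof.

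For contrast, the paper avoids all of these issues by working with Tits' solution to the word problem: starting from the (not necessarily reduced) concatenation of words for $w_1,\ldots,w_m$, every reduced decomposition of $w$ is reached by a finite sequence of elementary $M$-operations, and one checks case by case that each deletion $(s,s)$ and each braid move preserves the property of being writable as $w_1'\cdots w_m'$ with $w_i'\in\tilde W_i$. The crucial point, which plays the role of your ``no cancellation across an $\infty$-edge,'' is that a braid move involves only two letters and $m(t_{i-1},t_i)=\infty$, so no $M$-operation can move a letter past a $t_i$-boundary in a way that destroys the block structure. This bookkeeping is local (it never needs a global length-additivity statement) and is insensitive to repetitions among the $t_i$. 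If you want to salvage an induction on $m$, you would first have to prove a normalized product decomposition (e.g.\ with minimal-length coset representatives) and a genuine exchange-condition argument at the seam; as written, neither is in place.
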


Before we can prove this lemma, we introduce a few definitions and
state a theorem that will be useful in the lemma's proof.

However, this contradicts the 

The following definition is 2.32 in \cite{AB08}.

\begin{definition}
An elementary $M$-operation on a word in a Coxeter group is an
operation of one of the following two types:
\begin{enumerate}
\item[(MI)] Delete a subword of the form $(s,s)$.
\item[(MII)] Given $s,t\in S$ with $s\neq t$ and $m(s,t)<\infty$, replace an alternating subword $(s,t,\ldots )$ of length $m = m(s,t)$ by the alternating word $(t,s,\ldots)$ of length $m$.
\end{enumerate}
We say that a word is M-reduced if it cannot be shortened by any
finite sequence of elementary M-operations.
\end{definition}

The following is Theorem 2.33 in \cite{AB08} due to Tits
\cite{Tit69}.
\begin{theorem} \label{theorem reduced iff M-reduced}
\begin{enumerate}
\item[]
\item[(1)] A word is reduced if and only if it is M-reduced.
\item[(2)] Two reduced words represent the same element of $W$ if and only if one can be transformed into the other by elementary M-operations of type MII.
\end{enumerate}
\end{theorem}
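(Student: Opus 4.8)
The plan is to split part~(1) into its two implications, prove part~(2) first (since its nontrivial direction feeds into the hard half of~(1)), and in both to rely on one genuinely nontrivial input that I take as known from the prior development of $(W,S)$: the Exchange Condition, together with its standard consequence, the parabolic decomposition $W = W_J\cdot{}^{J}W$ with additive length. This is the combinatorial shadow of the faithfulness of the reflection representation of $W$ on $\mathbb{R}^{S}$; everything beyond it is bookkeeping with words.

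First I would dispatch the easy half of~(1) and the easy half of~(2) together. Both $M$-operations preserve the element of $W$ represented by a word: (MI) because $s^{2}=1$, and (MII) because the braid relation gives $\underbrace{sts\cdots}_{m(s,t)}=\underbrace{tst\cdots}_{m(s,t)}$. Moreover (MI) shortens a word by $2$ while (MII) preserves its length. Hence if a word is \emph{not} $M$-reduced, some finite sequence of $M$-operations produces a strictly shorter word representing the same element, so the original word was not of minimal length, i.e.\ not reduced; contrapositively, reduced $\Rightarrow$ $M$-reduced. The same observation shows MII-related reduced words represent the same element, which is the easy direction of~(2).

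Next I would prove the substantive direction of~(2) — that any two reduced words for the same $w$ are related by type-(MII) operations — by induction on $\ell(w)$. Given reduced words $\mathbf{a}=(a_{1},\dots,a_{k})$ and $\mathbf{b}=(b_{1},\dots,b_{k})$ for $w$, if $a_{1}=b_{1}=s$ then $(a_{2},\dots,a_{k})$ and $(b_{2},\dots,b_{k})$ are reduced words for $sw$, and the inductive hypothesis applies after stripping the common first letter. If $a_{1}=s\neq t=b_{1}$, then $s$ and $t$ are both left descents of $w$; the Exchange Condition and the dihedral structure of $W_{\{s,t\}}$ (the ``two-descent lemma'') force $m:=m(s,t)<\infty$ and give a factorization $w=w_{0}^{s,t}\,v$ with $\ell(w)=m+\ell(v)$, where $w_{0}^{s,t}=\underbrace{st\cdots}_{m}=\underbrace{ts\cdots}_{m}$ is the longest element of $W_{\{s,t\}}$. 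Choosing a reduced word $\mathbf{v}$ for $v$, the words $\mathbf{a}'=(\underbrace{s,t,\dots}_{m},\mathbf{v})$ and $\mathbf{b}'=(\underbrace{t,s,\dots}_{m},\mathbf{v})$ are reduced expressions for $w$ differing by a single (MII) move on their first $m$ letters. Since $\mathbf{a},\mathbf{a}'$ share the first letter $s$ and $\mathbf{b},\mathbf{b}'$ share the first letter $t$, induction connects $\mathbf{a}\leftrightarrow\mathbf{a}'$ and $\mathbf{b}\leftrightarrow\mathbf{b}'$ by (MII) moves, whence $\mathbf{a}\leftrightarrow\mathbf{a}'\leftrightarrow\mathbf{b}'\leftrightarrow\mathbf{b}$.

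Finally, for the hard half of~(1) (every non-reduced word is $M$-reducible) I would take a non-reduced word $(s_{1},\dots,s_{k})$ and let $j$ be minimal such that the prefix $(s_{1},\dots,s_{j})$ is not reduced. Then $w':=s_{1}\cdots s_{j-1}$ is reduced and $s_{j}$ is a right descent of $w'$, so the Exchange Condition provides an index $i<j$ for which, after right-multiplying by $s_{j}$ and cancelling the common prefix, $s_{i}s_{i+1}\cdots s_{j-1}=s_{i+1}\cdots s_{j-1}s_{j}$ — an equality of two reduced words of length $j-i$. By part~(2) these two subwords are connected by (MII) moves; performing those moves on the initial segment inside the larger word transforms the subword $(s_{i},\dots,s_{j-1},s_{j})$ into $(s_{i+1},\dots,s_{j-1},s_{j},s_{j})$, creating an adjacent repeated pair $(s_{j},s_{j})$ that (MI) then deletes. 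Thus the word is $M$-reducible, proving by contraposition that $M$-reduced $\Rightarrow$ reduced and completing~(1). The main obstacle is the two-descent step inside~(2): correctly extracting from ``$s$ and $t$ are both descents'' the finiteness of $m(s,t)$ and the additive factorization through $w_{0}^{s,t}$ — this is exactly where the Exchange Condition (hence the reflection representation) does the real work, and it is the hinge on which both the braid-connectedness and the final creation of a repeated letter depend.
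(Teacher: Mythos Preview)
The paper does not prove this theorem at all: it is quoted verbatim as Theorem~2.33 of \cite{AB08}, attributed to Tits~\cite{Tit69}, and used as a black box in the proof of Lemma~\ref{lemma for not reducing to trivial word}. There is therefore no ``paper's own proof'' to compare against.

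That said, your argument is correct and is essentially the standard proof one finds in the literature (e.g.\ in Abramenko--Brown or Bj\"orner--Brenti). The structure---easy directions first, then the braid-connectedness of reduced expressions by induction on $\ell(w)$ via the two-descent/parabolic factorization through $w_{0}^{\{s,t\}}$, and finally deducing $M$-reduced $\Rightarrow$ reduced by using the Exchange Condition to manufacture an adjacent repeat after (MII) moves---is exactly Tits' original route. One small point worth making explicit in your write-up of the last step: when you apply part~(2) to the equality $s_{i}s_{i+1}\cdots s_{j-1}=s_{i+1}\cdots s_{j-1}s_{j}$, you should note that both sides are reduced (the left as a subword of the reduced prefix, the right because it has the same length and represents the same element), since (2) only applies to reduced words. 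You do implicitly use this, but stating it removes any doubt.
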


A consequence of this theorem is that given any word, any reduced
decomposition of that word is obtained through a finite sequence of
elementary M-operations.
\\
Now we are ready to prove the lemma.
\begin{proof}[Proof of Lemma \ref{lemma for not reducing to trivial word}]
Suppose $w\in \tilde{W_1}\cdots\tilde{W_m}$, so we can write $w =
w_1\cdots w_m$ such that $w_i\in\tilde{W_i}$ for each $1\leq i\leq
m$ and such that each $w_i$ is written as a not necessarily reduced
word in $J\cup \{t_i\}$. Due to Theorem \ref{theorem reduced iff
M-reduced}, any reduced decomposition of $w$ is obtained by a finite
sequence of elementary M-operations. Thus it suffices to show that
after applying
any elementary M-operation, we can still write $w = w_1 '\cdots w_m '$ where each $w_i'\in \tilde{W}_i$. \\
First consider any MI-operation. The first case is when the
MI-operation occurs within some $w_i$. Then the resulting word $w_i'
= w_i$ in $W$ and thus still lies in $\tilde{W}_i$. Now consider any
MI-operation occurring in $w_{i-1}w_i$ for some $2\leq i\leq m$.
Since $t_{i-1}\neq t_i$, this means that $w_{i-1} = w_{i-1}'s$ and
$w_i = sw_i'$ for some $s\neq t_{i-1}, t_i$ and hence $s\in J$ so
$w_{i-1}'\in \tilde{W}_{i-1}$, $w_i'\in \tilde{W_i}$.
Thus, after applying the MI-operation to delete the $(s,s)$, we obtain the desired decomposition of $w$.\\
Now we consider any MII-operation. An MII-operation can occur in
some $w_i$, $w_{i-1}w_i$, or $w_{i-1}w_iw_{i+1}$ since it involves
only two letters, and we
know that $t_{i-1}\neq t_i\neq t_{i+1}$. We will examine each case in turn.\\
First suppose that the MII-operation occurs solely in some $w_i$.
Then the resulting word $w_i' = w_i$ in $W$ so $w_i'\in
\tilde{W}_i$. Now suppose that the MII-operation occurs in some
$w_{i-1}w_i$. Since $m(t_{i-1},t_i) = \infty$, it cannot involve
both letters. Suppose that it involves neither. Then it involves
some $s,t\in J$, and we must have $w_{i-1} = v_{i-1}u_{i-1}$ and
$w_i = u_iv_i$ where $v_{i-1}\in\tilde{W}_{i-1}, v_i\in
\tilde{W_i}$, and $u_{i-1}, u_i$ are alternating words in $s$ and
$t$ involved in the MII-operation. After performing the
MII-operation, we get an alternating word $u$ in the letters $s$ and
$t$ so that $w_{i-1}w_i = v_{i-1} u v_i$.
Let $w_{i-1}' = v_{i-1}u$ and $w_i' = v_i$. Then the result is in the desired form.\\
If the MII-operation involves $t_{i-1}$, then we must have $w_{i-1}
= v_{i-1}u_{i-1}$ and $w_i = sv_i$ with $s\in J, v_{i-1}\in W_{J\cup
\{t_{i-1}\}}, v_i\in \tilde{W_i}$, and $u_{i-1}$ an alternating word
in $s$ and $t_{i-1}$ ending in $t_{i-1}$. The MII-operation is on
$u_{i-1}s$ and transforms this into a word $u$ of the same length as
$u_{i-1}s$ but ending in $t_{i-1}$. Then $w_{i-1}w_i = v_{i-1}uv_i$.
Let $w_{i-1}' = v_{i-1}u$ and let $w_i' = v_i$. Note that $v_{i-1}u
= v_{i-1}u_{i-1}s$ since $u = u_{i-1}s$ in $W$. Since $s\in J$ and
$v_{i-1}u_{i-1} = w_{i-1}\in \tilde{W}_{i-1}$, $w_{i-1}' = w_{i-1}s
\in\tilde{W}_{i-1}$, so this decomposition is of the desired form.
The case where the MII-operation involves $t_i$ is similar to this case.\\
The final case is when an MII-operation occurs in some
$w_{i-1}w_iw_{i+1}$. Since $t_{i-1}\neq t_i\neq t_{i+1}$,
$m(t_{i-1}, t_i) = \infty = m(t_i, t_{i+1})$, and the operation
involves just two letters, one letter must be $t_i$ and the other
some $s\in J$ since there are no relations between $t_i$ and either
$t_{i-1}$ or $t_{i+1}$. In this case, we must have $w_{i-1} =
v_{i-1}s, w_i = t_is\cdots st_i$, and $w_{i+1} = sv_{i+1}$, with
$v_{i-1}\in \tilde{W}_{i-1}$ and $v_{i+1}\in\tilde{W}_{i+1}$. Then
after the MII-operation we are left with $w_{i-1}w_iw_{i+1} =
v_{i-1}t_is\cdots st_i v_{i+1}$. Let $w_{i-1}' = v_{i-1}, w_i' =
t_is\cdots st_i$, and $w_{i+1}' = v_{i+1}$.
Note that $w_i' = sw_is$ and lies in $\tilde{W}_i$ since $s\in J$. Thus this decomposition is in the desired form.\\
Thus, after either type of MII-operation, we can write $w =
w_1'\cdots w_m'$ with $w_i'\in \tilde{W}_i$. Any reduced
decomposition of $w$ is obtained from finitely many such operations,
so the resulting reduced word is also in this form.
\end{proof}

\section{Results on finite presentability}
In this section, we provide the relevant background results mentioned in the introduction and also define the homological finiteness properties $\FP_n$.

\begin{definition}
A group $G$ is \textit{of type $\FP_n$} if and only if there exists an exact sequence
$$
P_n\to P_{n-1}\to\cdots\to P_0\to \Z\to 0
$$
such that $P_i$ is a finitely generated projective $\Z[G]$-module for all $i\leq n$. We say that $G$ is of type $\FP_{\infty}$ if it is of type $\FP_n$ for all $n$.
\end{definition}

\begin{remark}
\begin{enumerate}
\item[]
\item A group $G$ is $\FP_1$ if and only if it is finitely generated.
\item If $G$ is finitely presented, then it is $\FP_2$.
\item If $G$ is finite, then $G$ is $\FP_{\infty}$.
\end{enumerate}
\end{remark}

We now provide the more general statement of Gandini's theorem which also applies to higher $\FP_n$ properties.

\begin{theorem}\label{thm: Gandini general}\cite{Gan12}
Let $G$ be a group acting on an $n$-dimensional contractible CW-complex with finite stabilizers. If $G$ has no bound on the orders of its finite subgroups, then $G$ is not $\FP_n$.
\end{theorem}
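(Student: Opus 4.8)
The plan is to argue by contradiction: assume that $G$ is of type $\FP_n$, and deduce from Brown's filtration criterion \cite{Bro87} and Kropholler's theorem \cite{Kro93} --- the two ingredients recalled in the introduction --- that the orders of the finite subgroups of $G$ are bounded, contrary to hypothesis.

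First I would arrange the action to be cellular in the rigid sense (every element that stabilizes a cell fixes it pointwise), as required by Brown's criterion in the form stated above. Passing from $X$ to its barycentric subdivision --- after refining the CW structure to a regular $G$-CW structure if necessary, which causes no difficulty for the complexes occurring in this paper --- preserves the dimension, contractibility, and the collection of cell stabilizers, and makes the action rigid: if $g$ fixes a simplex of the subdivision setwise, it permutes the barycenters of a chain $\sigma_0 < \cdots < \sigma_k$ of cells of pairwise distinct dimensions, hence fixes each $\sigma_i$, hence fixes that simplex pointwise. Every cell stabilizer is finite, so of type $\FP_\infty$, and Brown's criterion then says that $G$ is of type $\FP_\infty$ if and only if it is of type $\FP_n$; by the contradiction hypothesis, $G$ is of type $\FP_\infty$.

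Next I would note that $G$ lies in Kropholler's class $\mathbf{H}\mathfrak{F}$: the finite groups form its bottom layer, and a group acting cellularly on a finite-dimensional contractible CW-complex with all stabilizers in the class again lies in the class, which is exactly the situation of $G$ acting on $X$ with finite stabilizers. Kropholler's theorem then guarantees that an $\mathbf{H}\mathfrak{F}$-group of type $\FP_\infty$ has a bound on the orders of its finite subgroups, contradicting our hypothesis; hence $G$ is not of type $\FP_n$. This in particular recovers Theorem \ref{thm: Gandini specific}, whose hypothesis --- finite cell stabilizers of unbounded order --- is stronger than the present one.

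Once Brown's and Kropholler's results are granted there is no genuinely hard step; the two points that need attention are the reduction to a rigid action by subdivision, so that Brown's criterion applies verbatim, and the purely definitional verification that $G \in \mathbf{H}\mathfrak{F}$. I would also remark that the subdivision can be avoided: for an arbitrary cellular action the cellular chain groups of $X$ are direct sums of $\mathbb{Z}G$-modules induced from the finite stabilizers $G_\sigma$, hence of type $\FP_\infty$ over $\mathbb{Z}G$, which is precisely what the spectral-sequence argument behind Brown's criterion requires.
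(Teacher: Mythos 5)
Your argument is correct and is exactly the Brown-plus-Kropholler argument that the paper itself attributes to Gandini and sketches in its introduction; the paper gives no independent proof of this theorem, simply citing \cite{Gan12}. (Only your closing aside overreaches slightly: the cellular chain groups are in general \emph{infinite} direct sums of modules induced from stabilizers, and an infinite direct sum of modules of type $\FP_\infty$ need not be of type $\FP_\infty$, though the spectral-sequence form of Brown's criterion that you invoke does not require that.)
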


An immediate application of Theorem \ref{thm: Gandini general} is to the Davis realization of a locally finite twin building of type $(W,S)$.
If the maximal spherical subset of $S$ has cardinality $n$, then the Davis realization of each half of the twin building is of dimension $n$.
A group $G$ acting strongly transitively on this twin building has finite cell stabilizers and contains finite subgroups of unbounded order by
Proposition \ref{prop: bounds on finite subgroups} and Lemma \ref{lem: finite cell stabilizers} and thus is not $\FP_{2n}$ by Theorem \ref{thm: Gandini general}.

\begin{remark}
The Davis realization gives a bound on the finiteness length of the group $G$. However, this bound is not sharp in general.
In the next two sections, we will see that one can greatly improve upon this bound in the case that there is at least one $\infty$ in the diagram by choosing appropriate realizations on which $G$ acts cellularly.
\end{remark}

\subsection{Groups with (A)}
We assume the same set up as in Section \ref{sec: cell stabilizers}. That is, $G$ is a group acting strongly transitively on a thick twin building $(\Delta_+, \Delta_-,\delta^*)$ of type $(W,S)$, where $W$ is infinite and $S = \{s_i|1\leq i\leq n\}$. We also have a set of parameters $(q_{i})_{i=1}^n$ with $q_i\in\N$, $q_i\geq 2$ such that for any $s_i$-panel $\cP$, the number of chambers containing $\cP$ is $|\sC(\cP)| = q_i+1$. We assume $q_i$ to be finite for all $1\leq i\leq n$. Set $\qmin := \min q_i$ and $\qmax := \max q_i$. \\

We show that $G$ is not $\FP_2$, and therefore not finitely presented, for two large classes of Coxeter diagrams. In particular, these classes will prove the conjecture for the rank $3$ case; that is, if $G$ has rank $3$ Weyl group with one infinite label in its associated Coxeter diagram, then $G$ is not finitely presented.

\begin{definition}\label{def: (A)}
Suppose that $G$ has Coxeter system $(W,S)$. Then $G$ satisfies (A) if:
$S = J\sqcup K$, $|K|\geq 2$, such that $J\cup\{s\}$ is spherical for any $s\in K$ and $m(s,t) = \infty$ for any $s\neq t$ in $K$.
\end{definition}

The main result of this subsection is the following:
\begin{theorem}\label{thm: (A) not FP2}
If $G$ has Coxeter system $(W,S)$ satisfying (A), then $G$ is not $\FP_2$ and is therefore not finitely presented.
\end{theorem}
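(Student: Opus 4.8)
The plan is to apply Proposition \ref{prop: Gandini-reformulated} with $m = 1$, so the crux is to build, for a group $G$ satisfying (A), a $Z$-realization of $\Delta_{\pm}$ that is a $1$-dimensional contractible CW-complex, i.e.\ a tree, and whose panel-stabilizing subsets $S_z$ are all spherical. Once such a $Z$ is produced, Proposition \ref{prop: Gandini-reformulated} immediately gives that $B_{\pm}$ is not $\FP_1$ --- but since $B_{\pm}$ is finitely generated (it contains $T$ and the relevant root groups, or one argues directly), we should instead read off the $G$-statement: $G$ is not $\FP_{2m} = \FP_2$, hence not finitely presented. So the real content is the construction and verification of $Z$.

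First I would choose the model chamber $Z$. Write $S = J \sqcup K$ as in (A). Since $J \cup \{s\}$ is spherical for each $s \in K$, the natural candidate is to take $Z$ to be a realization of the poset of those subsets of $S$ that are contained in some $J \cup \{s\}$ (equivalently, spherical subsets meeting $K$ in at most one element), arranged so that $Z$ is contractible (it is a cone, since every such subset is contained in one of the finitely many maximal ones $J \cup \{s\}$, $s \in K$, and these all contain the common face indexed by $J$ --- or one collapses onto the vertex indexed by $J$ itself). The panels $Z_s$ should be set up so that $S_z$ runs exactly over these subsets; in particular every $S_z$ is spherical, giving condition (a). The dimension count is the delicate point: I want $Z(\Delta_{\pm})$ to be $1$-dimensional, so $Z$ itself should essentially be $1$-dimensional after the gluing, which forces $Z$ to be (a subdivision of) a graph whose only positive-dimensional cells correspond to moving ``along'' the $J$-directions in a controlled way --- this is where the spherical subsets of $J$ get absorbed into the ``thickened'' vertices and only the $K$-edges survive as genuine $1$-cells of $Z(\Delta_{\pm})$.

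The main obstacle is proving that $Z(\Delta_{\pm})$ is a tree, i.e.\ connected, $1$-dimensional, and simply connected (equivalently acyclic). Connectedness is routine (the building is gallery-connected). One-dimensionality is a bookkeeping check on the chosen $Z$. Simple connectedness is the heart of the matter and is exactly what Lemma \ref{lemma for not reducing to trivial word} is designed for: a loop in $Z(\Delta_+)$ corresponds to a closed gallery, and after retracting through the ``fat vertices'' (the $J$-residues) the loop is encoded by a word alternating through the $K$-generators $t_1, \dots, t_m$ with $m(t_{i-1},t_i) = \infty$; Lemma \ref{lemma for not reducing to trivial word} shows such a word can never collapse to the identity unless it is empty (since any reduced decomposition keeps one syllable in each $\tilde W_i$, so $\ell(w) \ge m$), which means the loop was already trivial. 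So the steps are: (1) define $Z$ and its panels $Z_s$ explicitly; (2) check $Z$ is contractible and that $Z(\Delta_{\pm})$ is $1$-dimensional; (3) verify each $S_z$ is spherical; (4) prove $Z(\Delta_{\pm})$ is connected and, using Lemma \ref{lemma for not reducing to trivial word}, simply connected, hence a tree; (5) invoke Proposition \ref{prop: Gandini-reformulated} with $m=1$ to conclude $G$ is not $\FP_2$, hence not finitely presented.

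One caveat I would flag: Proposition \ref{prop: Gandini-reformulated} as stated requires $W$ infinite (for the unbounded finite subgroups via Corollary \ref{cor: finite subgroups unbounded order}), and condition (A) with $|K| \ge 2$ and $m(s,t) = \infty$ on $K$ indeed forces $W$ infinite (the subgroup generated by two such $s,t$ is infinite dihedral), so this hypothesis is automatically met and should be noted. I expect the write-up to spend most of its length on step (4), the tree property, with the $\pi_1$-triviality argument routed explicitly through Lemma \ref{lemma for not reducing to trivial word}.
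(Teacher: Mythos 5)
Your overall strategy matches the paper's: take $Z$ to be (essentially) the star with centre $J$ and one leaf per $s\in K$, so that every $S_z$ is among $J$ and the spherical sets $J\cup\{s\}$, show $Z(\Delta_{\pm})$ is a tree using Lemma \ref{lemma for not reducing to trivial word}, and feed this into the Gandini machinery with $m=1$. (The paper's precise choice is $Z=|K(\cS_{\geq J})|$ with $Z_s=Z$ for $s\in J$ and $Z_s$ the vertex $J\cup\{s\}$ for $s\in K$; it is this convention for $s\in J$, forcing whole $J$-residues to collapse to a single $Z$-chamber, that resolves the ``dimension count'' you flag as delicate, so your step (2) does go through once $Z$ is pinned down this way.)

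There is, however, a genuine gap in your step (4). Your argument reads a word $w_1\cdots w_m$ with $w_j\in\tilde W_{i_j}$ off a closed gallery and invokes Lemma \ref{lemma for not reducing to trivial word} to conclude the loop is trivial. That inference is only valid inside an apartment, where the Weyl distance is additive along galleries, so that a circuit forces $1=\delta(C_0,C_0)=w_1\cdots w_m$. In the thick building the product of the Weyl distances along a closed gallery need not be the identity (already for three distinct chambers in one thick panel one gets $\delta(C_0,C_1)\delta(C_1,C_2)\delta(C_2,C_0)=s\neq 1$), so the contradiction does not materialize directly. The paper closes this gap in two stages: Lemma \ref{lem: Z(A) tree} proves $Z(A)$ is a tree for every apartment $A$ by exactly your circuit argument, and then Lemma \ref{lemma Z(Delta) tree} upgrades this to $Z(\Delta)$ via the local-to-global principle of Proposition \ref{prop Z(A) CAT(0) then Z(Delta) is too} ($Z(W,S)$ CAT(0) implies $Z(\Delta)$ CAT(0), and a connected $1$-dimensional CAT(0) complex is a tree). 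Some such apartment-to-building step must be supplied; without it the proof is incomplete. A small additional correction: your parenthetical that $B_{\pm}$ is finitely generated is false here --- $B_{\pm}$ contains infinitely many root groups, and indeed the paper's Proposition \ref{prop: Borel (A)} deduces from this same construction that $B_{\pm}$ is \emph{not} $\FP_1$; there is no tension to route around, and the $G$-statement and the $B_{\pm}$-statement are both valid outputs of Proposition \ref{prop: Gandini-reformulated}.
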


We state a special case of this as a quick corollary which yields strong evidence that the conjecture is true:

\begin{corollary} \label{cor: spherical as possible}
Suppose that $G$ has Weyl group $W$ with generating set $S = J\cup \{s,t\}$ such that $m(s,t) = \infty$ and $J\cup \{s\}$ and $J\cup \{t\}$ are both spherical. Then $G$ is not $\FP_2$.
\end{corollary}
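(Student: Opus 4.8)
The plan is to deduce the corollary directly from Theorem~\ref{thm: (A) not FP2}: I will show that its hypotheses are precisely an instance of condition (A) of Definition~\ref{def: (A)} and then quote that theorem. First I would set $K := \{s,t\}$, keeping $J$ as given, so that $S = J \cup K$; since $m(s,s) = 1$ forces $s \neq t$, we get $|K| = 2 \geq 2$, which is the cardinality requirement in (A). Next I would observe that the only pair of distinct elements of $K$ is $\{s,t\}$, and $m(s,t) = \infty$ by hypothesis, so the clause ``$m(s',t') = \infty$ for all $s' \neq t'$ in $K$'' holds; and that the clause ``$J \cup \{u\}$ is spherical for all $u \in K$'' is, because $K = \{s,t\}$, exactly the assumption that $J \cup \{s\}$ and $J \cup \{t\}$ are both spherical.

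Before invoking the theorem I would verify the two pieces of fine print. Condition (A) asks for a \emph{disjoint} decomposition $S = J \sqcup K$: but if $s \in J$, then $\{s,t\} \subseteq J \cup \{t\}$ would be a non-spherical subset (as $m(s,t) = \infty$ makes $W_{\{s,t\}} \cong D_\infty$ infinite) of a spherical set, a contradiction; hence $s \notin J$, and symmetrically $t \notin J$ using that $J \cup \{s\}$ is spherical, so $J \cap K = \emptyset$ for free. Likewise, the standing assumptions of the present subsection require $W$ to be infinite, and this too is automatic, since $W_{\{s,t\}} \cong D_\infty$ is an infinite subgroup of $W$. With these checks done, condition (A) is satisfied, and Theorem~\ref{thm: (A) not FP2} yields at once that $G$ is not $\FP_2$, hence not finitely presented.

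I expect no genuine obstacle here: the corollary is a straight specialization, and the only thing requiring a moment's thought is the observation that the two sphericity hypotheses together with $m(s,t) = \infty$ automatically supply both the disjointness of $J$ and $\{s,t\}$ and the infinitude of $W$, so that all standing hypotheses of the subsection and all clauses of (A) are met without any extra assumption. (Should one prefer to avoid citing Theorem~\ref{thm: (A) not FP2}, the alternative is to reprove it in this special case: construct a $Z$-realization of $\Delta_\pm$ with $S_z$ spherical for every $z \in Z$ and with $Z(\Delta_\pm)$ a tree, then apply Proposition~\ref{prop: Gandini-reformulated} with $m = 1$ --- but that is exactly what Theorem~\ref{thm: (A) not FP2} already does.)
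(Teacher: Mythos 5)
Your proposal is correct and is exactly the paper's route: the corollary is stated as the special case $K=\{s,t\}$ of condition (A), and Theorem \ref{thm: (A) not FP2} is invoked directly. Your extra verifications that $J\cap\{s,t\}=\emptyset$ and that $W$ is infinite follow automatically from $m(s,t)=\infty$ and the sphericity hypotheses are sound and, if anything, make the deduction more careful than the paper's one-line presentation.
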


Let $S = J\cup K$ as in Definition \ref{def: (A)}. Recall that we defined $\cS$ earlier to be the set of spherical subsets of $S$.
Set $\cS' := \cS_{\geq J}$ to be the set of spherical subsets of $S$ containing $J$. We now define $Z = |K(\cS')|$ to be the geometric
realization of the flag complex on this set $\cS'$, and we define the $s$-panel $Z_s = |K(\cS'_{\geq s})|$ to be the geometric realization
of the flag complex on the spherical subsets of $S$ containing $J\cup \{s\}$ for all $s\in S$. In this case, the complex $Z$ is easy to describe.
The only spherical subsets of $S$ containing $J$ are $J$ and $J\cup\{t\}$ for $t\in K$. If we let $K = \{s_1,\ldots, s_m\}$, then the complex $Z$ is:

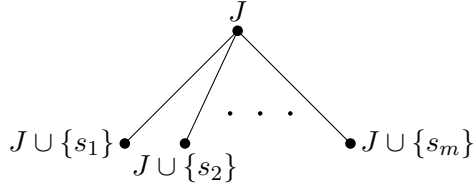
\begin{figure}[h]
\begin{center}
\begin{tikzpicture}
\fill (0,0) circle (.07);
\fill (.8,0) circle (.07);
\fill (3,0) circle (.07);
\fill (1.5,1.5) circle (.07);
\draw (1.5,1.5) - - (0,0) node[above, pos = 0]{$J$} node[left, pos = 1]{$ J\cup \{s_1\}$};
\draw (1.5,1.5) - - (.8,0) node[below, pos = 1]{$J \cup \{s_2\}$};
\draw (1.5,1.5) - - (3,0) node[right, pos = 1]{ $J\cup \{s_m\}$};
\fill (1.4, .4) circle (.03);
\fill (1.8, .4) circle (.03);
\fill (2.2, .4) circle (.03);
\end{tikzpicture}
\end{center}
\caption{The complex $Z = |K(\cS')|$ where $K = \{t_1,\ldots, t_m\}$.}
\label{fig: Z with (A)}
\end{figure}

The panels are easy to describe as well. For $s\in K$, $Z_s = |K(\cS'_{\geq s})|$ is exactly the vertex corresponding to $J\cup \{s\}$ (i.e. all vertices except the top one in Figure \ref{fig: Z with (A)}). For $s\in J$, $Z_s = Z$ since $\cS'_{\geq s} = \cS'$ for any $s\in J$. We now state a few consequences of this panel structure in $Z$.

\begin{lemma}\label{lem: panel structure (A)}
Let $\Delta \in\{\sC_+, \sC_-\}$ and $\delta$ be the corresponding Weyl distance. Let $C,D\in\Delta$, and recall that $Z(C) =\{[C,z]|z\in Z\}$.
\begin{enumerate}
\item[(1)] $\delta(C,D)\in W_J$ if and only if $Z(C) = Z(D)$.
\item[(2)] Let $s\in K$. Then $\delta(C,D) \in W_{J\cup\{s\}}$ if and only if $[C,z] = [D,z]$ for $z\in Z_s$.
\end{enumerate}
\end{lemma}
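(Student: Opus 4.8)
The plan is to derive both statements directly from the point identification rule $[C,z]=[D,z']$ iff $z=z'$ and $\delta(C,D)\in\langle S_z\rangle$, once the sets $S_z=\{s\in S\mid z\in Z_s\}$ are known explicitly for every $z\in Z$. So the first step is purely combinatorial: record the structure of $Z$, its panels, and the $S_z$. Since $m(s,t)=\infty$ for distinct $s,t\in K$, the only spherical subsets of $S$ containing $J$ are $J$ itself and the sets $J\cup\{s\}$ for $s\in K$; hence $Z=|K(\cS')|$ is the tree of Figure \ref{fig: Z with (A)}, with one central vertex $v_J$ corresponding to $J$ and one leaf $v_s$, joined to $v_J$ by an edge, for each $s\in K$. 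As noted just before the lemma, for $s\in J$ we have $Z_s=Z$, and for $s\in K$ we have $Z_s=\{v_s\}$. Consequently $S_{v_s}=J\cup\{s\}$ for $s\in K$, while $S_z=J$ whenever $z=v_J$ or $z$ is an interior point of an edge. In particular $J\subseteq S_z$ for all $z$, and $S_z=J$ for at least one $z$ (namely $v_J$).

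For part (2), fix $s\in K$. Then $Z_s$ is the single point $v_s$, so the condition ``$[C,z]=[D,z]$ for all $z\in Z_s$'' is just $[C,v_s]=[D,v_s]$, which by the identification rule holds iff $\delta(C,D)\in\langle S_{v_s}\rangle=\langle J\cup\{s\}\rangle=W_{J\cup\{s\}}$. This is immediate from the first step.

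For part (1), I would first reduce set-equality of $Z$-chambers to pointwise equality: if $[C,z]=[D,z']$ then $z=z'$, so $Z(C)=Z(D)$ forces $[C,z]=[D,z]$ for every $z\in Z$, and the converse is clear. Thus $Z(C)=Z(D)$ iff $\delta(C,D)\in\langle S_z\rangle$ for all $z$, i.e.\ $\delta(C,D)\in\bigcap_{z\in Z}\langle S_z\rangle$. By the first step this intersection equals $W_J$: it contains $W_J$ because $J\subseteq S_z$ for every $z$, and it is contained in $\langle S_{v_J}\rangle=W_J$. Hence $Z(C)=Z(D)$ iff $\delta(C,D)\in W_J$.

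I do not anticipate a real obstacle. The only points requiring a little care are the set-versus-pointwise reduction in part (1) and the observation that $\bigcap_z\langle S_z\rangle$ collapses to $W_J$ precisely because the central vertex $v_J$ already realizes $S_{v_J}=J$; everything else is bookkeeping with the equivalence relation defining $Z(\Delta)$.
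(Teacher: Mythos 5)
Your proof is correct and follows essentially the same route as the paper: both directions of (1) rest on the facts that $J\subseteq S_z$ for every $z\in Z$ and that $S_z=J$ at the central vertex/interior points, and (2) is the identification rule applied at the single point $Z_s=\{v_s\}$ where $S_{v_s}=J\cup\{s\}$. Your explicit reduction of the set equality $Z(C)=Z(D)$ to pointwise equality (via $[C,z]=[D,z']\Rightarrow z=z'$) is a small point the paper leaves implicit, but the argument is the same.
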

\begin{proof}
Since any $z\in Z$ lies in $Z_s$ for all $s\in J$, we have $J\subset S_z$ and hence $W_J\leq \langle S_z\rangle$. Therefore, if $\delta(C,D)\in W_J$ for two chambers $C,D\in \sC$, we have $[C,z]=[D,z]$ for all $z\in Z$. That is $Z(C) = Z(D)$. Conversely, if $Z(C) = Z(D)$, then this means in particular that $[C,z]=[D,z]$ for any interior point $z\in Z$, that is, a point that is not in $Z_s$ for any $s\in K$. Then $\delta(C,D)\in W_J$.\\
The second part is very similar. If $z\in Z_s$ for some $s\in K$, then $S_z = J\cup \{s\}$, so $\delta(C,D)\in W_{J\cup \{s\}}$ if and only if  $[C,z]=[D,z]$.
\end{proof}

\begin{remark}\label{rem: residues collapse}
In terms of the complex $Z(\Delta)$, Lemma \ref{lem: panel structure (A)}(1) implies that there is exactly one copy of $Z$ for each $J$-residue in $\Delta$.
\end{remark}

Our goal is to show that $Z(\Delta)$ is a tree so that our group acts on a contractible space of the correct dimension. In order to show this, we will show that $Z(A)$ is a tree for any apartment $A$ of $\Delta$ and that this is enough to prove that $Z(\Delta)$ is a tree. First we need the following result which is Proposition 12.29 in \cite{AB08}:

\begin{proposition}\label{prop Z(A) CAT(0) then Z(Delta) is too}
Suppose that $Z(W,S)$ is a $\CAT(\kappa)$ space for some real number $\kappa$. Then the $Z$-realization of any building $\Delta$ of type $(W,S)$ is a $\CAT(\kappa)$ space.
\end{proposition}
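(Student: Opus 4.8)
\emph{Plan.} The idea is to realize $Z(\Delta)$ as the union of its apartment realizations and to transport the $\CAT(\kappa)$ property from $Z(W,S)$ using the retractions of the building onto its apartments. Write $D_{\kappa}=\pi/\sqrt{\kappa}$ if $\kappa>0$ and $D_{\kappa}=\infty$ if $\kappa\le 0$, and recall that a metric space is $\CAT(\kappa)$ precisely when it is $D_{\kappa}$-geodesic and every geodesic triangle of perimeter $<2D_{\kappa}$ satisfies the $\CAT(\kappa)$ comparison (equivalently, has Alexandrov angles no larger than the corresponding comparison angles). For an apartment $\mathcal{A}$ of $\Delta$, the canonical type-preserving isomorphism $\mathcal{A}\cong\Sigma(W,S)$ onto the Coxeter complex respects panels, hence induces an isometry $Z(\mathcal{A})\cong Z(W,S)$; so every $Z(\mathcal{A})$ is $\CAT(\kappa)$ and, in particular, uniquely geodesic below $D_{\kappa}$.

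\emph{Retractions and geodesicity.} For an apartment $\mathcal{A}$ and a chamber $C\in\mathcal{A}$, the retraction $\rho_{\mathcal{A},C}\colon\Delta\to\mathcal{A}$ is a panel-respecting morphism, so by the functoriality of the construction in Section~12.1 of \cite{AB08} it induces a map $r_{\mathcal{A},C}=Z(\rho_{\mathcal{A},C})\colon Z(\Delta)\to Z(\mathcal{A})$ which fixes $Z(\mathcal{A})$ pointwise, is an isometry on each closed $Z$-chamber (hence $1$-Lipschitz and length-nonincreasing on paths), and --- because $\rho_{\mathcal{A},C}$ restricts to an isomorphism on every apartment through $C$ --- restricts to an isometry $Z(\mathcal{A}')\cong Z(\mathcal{A})$ for every apartment $\mathcal{A}'\ni C$. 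Since $Z(\Delta)$ carries the quotient length metric, a $1$-Lipschitz retraction onto $Z(\mathcal{A})$ forces $Z(\mathcal{A})$ to be isometrically embedded. Using the building axiom that any two chambers lie in a common apartment, any two points of $Z(\Delta)$ lie in some $Z(\mathcal{A})$, where they are joined by a geodesic when less than $D_{\kappa}$ apart; hence $Z(\Delta)$ is $D_{\kappa}$-geodesic. I would also establish at this point that each $Z(\mathcal{A})$ is convex in $Z(\Delta)$: for a geodesic $\gamma$ with endpoints in $Z(\mathcal{A})$ and any $C\in\mathcal{A}$, the path $r_{\mathcal{A},C}\circ\gamma$ is a geodesic of $Z(\mathcal{A})$ of the same length, and one argues that $\gamma$ runs through a gallery issuing from $C$ that lies in an apartment through $C$, so that $r_{\mathcal{A},C}$ fixes $\gamma$ and $\gamma\subseteq Z(\mathcal{A})$.

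\emph{Comparison, and the main obstacle.} It remains to check the comparison inequality for a geodesic triangle $\triangle(x,y,z)$ of perimeter $<2D_{\kappa}$. Fixing a vertex $x$ and a chamber $C_x$ carrying $x$, convexity gives $[x,y]\subseteq Z(\mathcal{A}_1)$ and $[x,z]\subseteq Z(\mathcal{A}_2)$ with $C_x\in\mathcal{A}_1\cap\mathcal{A}_2$, while $[y,z]\subseteq Z(\mathcal{A}_0)$ for some apartment $\mathcal{A}_0$ through chambers carrying $y$ and $z$; the plan is to fold the whole triangle into a single $\CAT(\kappa)$ apartment realization by a suitable retraction that keeps the two sides at $x$ isometric and does not lengthen the third, so that the comparison triangle is unchanged (or improved), $\CAT(\kappa)$-ness of the target apartment applies, and the angle bound at $x$ descends to $\triangle(x,y,z)$; running over the three vertices then finishes the proof. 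The genuine difficulty --- and the step I expect to be hardest --- is exactly this reduction: a geodesic triangle need not lie in any one apartment, and, as the example of a short circle $1$-Lipschitz-retracting onto a sub-arc shows, having $1$-Lipschitz retractions onto $\CAT(\kappa)$ subspaces does not by itself imply $\CAT(\kappa)$. Making the fold work must use the abundance of apartments and the incidence axioms in an essential way (controlling how $\mathcal{A}_1,\mathcal{A}_2$ meet $\mathcal{A}_0$, or, alternatively, reducing to a local statement via the links of cells of $Z(\Delta)$, which are again realizations of lower-rank residues and hence $\CAT(1)$ by induction on the rank, together with a local-to-global criterion fed by the contractibility of $Z(\Delta)$); this is where the hypothesis that $Z(W,S)$ is $\CAT(\kappa)$ --- equivalently, that apartment realizations are $\CAT(\kappa)$ and uniquely geodesic --- is fully exploited.
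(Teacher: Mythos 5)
This proposition is not proved in the paper at all: it is quoted verbatim as Proposition 12.29 of \cite{AB08} (essentially Davis's theorem that a building realization is $\CAT(\kappa)$ as soon as its apartment realizations are), so there is no in-paper argument to compare against. Judged on its own terms, your text is a plan rather than a proof, and the gap is exactly where you say it is: the comparison inequality for a geodesic triangle that does not lie in a single apartment. Everything before that point (isometric embedding of $Z(\mathcal{A})$ via the $1$-Lipschitz retraction $r_{\mathcal{A},C}$, hence $D_\kappa$-geodesicity of $Z(\Delta)$) is fine, but the convexity of $Z(\mathcal{A})$ is asserted with only a gesture (``one argues that $\gamma$ runs through a gallery \dots''), and the decisive ``fold'' is described only as something that ``must'' work. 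A proof cannot end with the statement of its hardest step.

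The missing ingredient is a specific lemma, and your proposed fold is aimed at the wrong vertex. The lemma (Davis's key lemma, Lemma 12.30ff in \cite{AB08}) is: if $x\in Z(C)$ and $\mathcal{A}$ is an apartment containing $C$, then $d(x,\rho_{\mathcal{A},C}(y))=d(x,y)$ for \emph{every} $y\in Z(\Delta)$ --- i.e.\ the retraction based at $C$ preserves distances \emph{from} points of $Z(C)$, not merely lengths of paths; this is proved by lifting paths of $Z(\mathcal{A})$ issuing from $x$ back into $Z(\Delta)$ using that $\rho_{\mathcal{A},C}$ restricts to an isomorphism on every apartment through $C$. With it, one verifies $d(x,p)\le d(\bar x,\bar p)$ for $p$ on the side $[y,z]$ as follows: put $[y,z]$ inside $Z(\mathcal{A})$ for an apartment $\mathcal{A}$ containing chambers carrying $y$ and $z$, choose a chamber $C_p$ of $\mathcal{A}$ with $p\in Z(C_p)$, and apply $\rho_{\mathcal{A},C_p}$. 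This fixes the side $[y,z]$ pointwise, preserves $d(p,x)$, and only \emph{shortens} the two sides at $x$; the inequality then follows from the $\CAT(\kappa)$ property of $Z(\mathcal{A})$ together with Alexandrov's lemma (monotonicity of comparison distances under shortening the adjacent sides). Your sketch instead folds at the apex $x$ and asks the retraction to keep the two sides at $x$ isometric while not lengthening $[y,z]$ --- that is not what any retraction of the building provides, and as you yourself observe with the circle example, $1$-Lipschitz retractions alone cannot close the argument. So the proposal correctly identifies the architecture but omits the one lemma that makes it work; as written it does not constitute a proof.
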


Here $Z(W,S)$ is the $Z$-realization of the standard Coxeter complex of type $(W,S)$, so this proposition applies to any apartment in a building of type $(W,S)$. We will be applying this proposition specifically for $\kappa = 0$.

\begin{lemma}\label{lem: Z(A) tree}
$Z(A)$ is a tree for any apartment $A$ of $\Delta$.
\end{lemma}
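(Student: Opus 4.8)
The plan is to show that $Z(A)$ is a CAT$(0)$ space and is one-dimensional, since a one-dimensional CAT$(0)$ space has no circuits and is connected, hence is a tree. By Proposition \ref{prop Z(A) CAT(0) then Z(Delta) is too}, it suffices to check that $Z(W,S)$ (the $Z$-realization of the standard Coxeter complex) is CAT$(0)$, and then observe that any apartment $A$ is isomorphic to this standard Coxeter complex. For the CAT$(0)$ condition, I would equip the model chamber $Z = |K(\cS')|$ with the standard piecewise-Euclidean metric (giving each simplex the structure of an orthoscheme, or simply the metric flag complex metric as in Davis's construction / Section 12 of \cite{AB08}); since $Z$ here is just a ``star'' — a cone on $m$ points with apex $J$ (Figure \ref{fig: Z with (A)}) — it is visibly a one-dimensional simplicial complex, so $Z(W,S)$ is one-dimensional as well.

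The substantive point is local: by the Cartan--Hadamard / Gromov link criterion, a piecewise-Euclidean complex is CAT$(0)$ iff it is simply connected and the link of every vertex is CAT$(1)$. In dimension one this reduces to: the complex is connected, simply connected, and every vertex link is a discrete metric space in which every closed geodesic (i.e. every cycle in the graph) has length $\geq 2\pi$. So the concrete task is to verify that $Z(A)$ contains no embedded circuits, equivalently that $Z(A)$ is simply connected, equivalently (since it is a graph) that it is a tree. This is where Lemma \ref{lemma for not reducing to trivial word} enters: a circuit in $Z(A)$ would be a closed edge-path, and translating adjacency of $Z$-chambers via Lemma \ref{lem: panel structure (A)} (one copy of $Z$ per $J$-residue, copies glued at the vertex $J\cup\{s\}$ exactly when the corresponding $J$-residues share an $s$-panel), such a circuit produces a nontrivial word $w \in \tilde W_1 \cdots \tilde W_m$ (for suitable indices in $K$) that represents the identity — contradicting the ``$\ell(w) \geq m \geq 1$'' conclusion of Lemma \ref{lemma for not reducing to trivial word}.

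Concretely, I would argue as follows. By Remark \ref{rem: residues collapse} the vertices of $Z(A)$ are of two kinds: ``apex'' vertices $[C, J]$, one per $J$-residue of $A$, and ``panel'' vertices $[C, J\cup\{s\}]$ with $s \in K$, one per $s$-panel of $A$ meeting that residue; edges join each apex vertex to the panel vertices of its residue. Thus $Z(A)$ is the incidence graph of ($J$-residues of $A$) versus ($s$-panels, $s\in K$), which is bipartite. A reduced circuit would alternate residue--panel--residue--$\cdots$, and reading off a minimal gallery across it yields an element of the form $\tilde w_1 \cdots \tilde w_r$ with each $\tilde w_i \in \tilde W_{j_i}$, $j_i \in K$ and consecutive $j_i$ distinct, which equals $1 \in W$; but $m(s,t) = \infty$ for distinct $s,t \in K$, so Lemma \ref{lemma for not reducing to trivial word} forces $\ell \geq r \geq 1$, a contradiction. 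Hence $Z(A)$ has no circuits; it is clearly connected (since $A$ is), so it is a tree. Combined with Proposition \ref{prop Z(A) CAT(0) then Z(Delta) is too} this also re-proves CAT$(0)$-ness, but for the present lemma the combinatorial circuit argument suffices. The main obstacle I anticipate is bookkeeping: carefully matching a closed edge-path in $Z(A)$ to a word lying in $\tilde W_1 \cdots \tilde W_m$ in the precise sense of Lemma \ref{lemma for not reducing to trivial word} — in particular making sure the ``no backtracking'' condition on the circuit translates exactly into the ``consecutive generators in $K$ are distinct'' hypothesis of that lemma.
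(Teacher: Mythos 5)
Your proposal is correct and follows essentially the same route as the paper: connectedness via galleries, and the absence of circuits by converting a non-backtracking closed path of $Z$-chambers into an identity $1 = w_1\cdots w_m$ with $w_j \in \tilde{W}_j$ and consecutive indices in $K$ distinct, contradicting Lemma \ref{lemma for not reducing to trivial word}. The opening CAT$(0)$ framing is unnecessary for this lemma (the paper invokes Proposition \ref{prop Z(A) CAT(0) then Z(Delta) is too} only later, to pass from apartments to the whole building), but your combinatorial core argument matches the paper's proof.
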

\begin{proof}
We must show that $Z(A)$ is connected and has no circuits. First we show that $Z(A)$ is connected. Let $p = [C,z]$ and $q = [D,z']$ be two points in $Z(A)$. Let $\Gamma: C = C_0, C_1,\ldots, C_m = D$ be a gallery from $C$ to $D$ in $A$ of type $(t_1,\ldots, t_m)$ with $t_i\in S$ for $1\leq i\leq m$. This means that $Z(C_{j-1})$ and $Z(C_j)$ are glued along a copy of $Z_{t_j}$. Then since each $Z(C_j)$ is connected itself, we can take a path from $p$ to $q$ through $Z(C),Z(C_1),\ldots, Z(C_{m-1}), Z(D)$. \\
Now we show that there are no circuits. First, it is clear that $Z$ itself is a tree, so a circuit must involve more than one $Z$-chamber. A natural consequence of (1) from Lemma \ref{lem: panel structure (A)} is that two $Z$-chambers that intersect at an interior point of $Z$ must coincide entirely. Therefore distinct $Z$-chambers may only be glued along a copy of $Z_s$ for $s\in K$. Moreover, they can be glued along exactly one such panel: If $s,t\in K$ with $s\neq t$ and $[C,z] = [D,z]$ for $z\in Z_s$ and $[C,z']=[D,z']$ $z'\in Z_t$, then $\delta(C,D)\in W_{J\cup\{s\}}\cap W_{J\cup\{t\}} = W_J$, so $Z(C) = Z(D)$. \\
Now assume that there exists a circuit in $Z(A)$. Fix a point $p = [C_0,z]$ in the circuit; we may assume that $p$ is an interior point of $Z(C_0)$ since the circuit involves at least two distinct $Z$-chambers. Then we can denote the circuit by a gallery $Z(C_0), Z(C_1),\ldots, Z(C_m)= Z(C_0)$ given by the $Z$-chambers that the circuit passes through, where $Z(C_{j-1})$ and $Z(C_j)$ are glued along a panel of type $s_{i_j}\in K$ for all $1\leq j\leq m$. In particular, $Z(C_{j-1})\neq Z(C_j)$. By fixing an interior point as our start and end point, we have required that the final $Z$-chamber be the same as the first, and hence we can choose $C_m = C_0$. By definition of our equivalence relation, $\delta(C_{j-1},C_j)= w_j \in W_{J\cup \{s_{i_j}\}}\backslash W_J$ since $Z(C_{j-1})\neq Z(C_j)$. To follow the notation of Lemma \ref{lemma for not reducing to trivial word}, set $t_j:= s_{i_j}$ and $\tilde{W_j}:= W_{J\cup\{s_{i_j}\}}\backslash W_J$. We may assume that $t_{j-1}\neq t_j$ for all $2\leq j\leq m$ since if $t_{j-1} = t_j$, then the $Z$-chambers $Z(C_{j-1}), Z(C_j)$, and $Z(C_{j+1})$ all intersect at the panel corresponding to $t_j$, and therefore the gallery can skip $Z(C_j)$ and move from $Z(C_{j-1})$ directly to $Z(C_{j+1})$. \\
Since $A$ is an apartment, we know that for any three chambers $C, D, E$ in $A$, $\delta(C,E) = \delta(C,D)\delta(D,E)$. Therefore, $1 = \delta(C_0, C_0) = w_1\cdots w_m\in \tilde{W}_1\cdots \tilde{W}_m$. By Lemma \ref{lemma for not reducing to trivial word}, $\ell(\delta(C_0, C_0))\geq m$, which is a contradiction. Therefore no circuit can exist, and $Z(A)$ is a tree.
\end{proof}

\begin{lemma}\label{lemma Z(Delta) tree}
$Z(\Delta)$ is a tree.
\end{lemma}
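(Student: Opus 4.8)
The plan is to deduce that $Z(\Delta)$ is a tree from the fact that $Z(A)$ is a tree for every apartment $A$ (Lemma \ref{lem: Z(A) tree}), by first establishing that $Z(\Delta)$ is contractible via the $\CAT(0)$ machinery, and then separately checking that it is one-dimensional. For contractibility, I would invoke Proposition \ref{prop Z(A) CAT(0) then Z(Delta) is too}: since each apartment $A$ is a standard Coxeter complex of type $(W,S)$ and $Z(A)$ is a tree, hence $\CAT(0)$, the space $Z(W,S)$ is $\CAT(0)$, so $Z(\Delta)$ is $\CAT(0)$ and in particular contractible (and connected).

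Next I would argue that $Z(\Delta)$ is one-dimensional, i.e. genuinely a graph. The complex $Z$ itself is one-dimensional (it is the cone displayed in Figure \ref{fig: Z with (A)}, a star of edges), so every cell $[C,\sigma]$ of $Z(\Delta)$ has $\dim\sigma \le 1$; thus $Z(\Delta)$ is at most one-dimensional as a CW-complex. Combined with connectedness from the previous paragraph, it only remains to rule out circuits. Here the key point is a \emph{local-to-global} reduction: any circuit in $Z(\Delta)$ is a compact subset, so it meets only finitely many $Z$-chambers $Z(C_0),\ldots,Z(C_k)$, and consecutive ones are glued along panels, so the chambers $C_0,\ldots,C_k$ lie in a common gallery and hence are contained in a single apartment $A$ of $\Delta$ (any finite set of chambers lies in an apartment, or one builds $A$ from the gallery). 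Then the circuit lies inside $Z(A)$, contradicting Lemma \ref{lem: Z(A) tree}.

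Alternatively — and this is cleaner given what has been set up — I would not even need to rule out circuits directly: a connected one-dimensional CW-complex that is contractible is automatically a tree, since a graph is contractible iff it is a connected acyclic graph (a non-tree connected graph has $H_1 \ne 0$). So the argument reduces to: (i) $Z(\Delta)$ is connected (shown already inside the proof of Lemma \ref{lem: Z(A) tree} by the same gallery argument, or follows from $\CAT(0)$); (ii) $Z(\Delta)$ is at most one-dimensional because $Z$ is; (iii) $Z(\Delta)$ is contractible by Proposition \ref{prop Z(A) CAT(0) then Z(Delta) is too} applied with $\kappa = 0$, using Lemma \ref{lem: Z(A) tree} to feed in that $Z(W,S)$ is $\CAT(0)$.

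The main obstacle is really just the bookkeeping in step (iii): one must be careful that Proposition \ref{prop Z(A) CAT(0) then Z(Delta) is too} is stated for $Z$-realizations of \emph{buildings} and that each of $\Delta_+$ and $\Delta_-$ is such a building of type $(W,S)$, whose apartments are all isomorphic to the standard Coxeter complex, so that Lemma \ref{lem: Z(A) tree} does apply uniformly; and one should note that $\CAT(0)$ spaces are contractible (being uniquely geodesic, they deformation retract to a point along geodesics). Everything else is formal. I would write the proof as: $Z$ is one-dimensional, so $Z(\Delta)$ is a one-dimensional CW-complex; it is connected by the gallery argument used in Lemma \ref{lem: Z(A) tree}; by Lemma \ref{lem: Z(A) tree} every $Z(A)$ is a tree, hence $\CAT(0)$, so $Z(W,S)$ is $\CAT(0)$, and Proposition \ref{prop Z(A) CAT(0) then Z(Delta) is too} gives that $Z(\Delta)$ is $\CAT(0)$, hence contractible; a connected, contractible, one-dimensional CW-complex is a tree.
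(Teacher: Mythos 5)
Your main argument is precisely the paper's own proof: $Z(\Delta)$ is connected by the gallery argument, one-dimensional because $Z$ is, and $\CAT(0)$ by Proposition \ref{prop Z(A) CAT(0) then Z(Delta) is too} together with Lemma \ref{lem: Z(A) tree} (a connected one-dimensional complex is a tree iff it is $\CAT(0)$, equivalently iff it is contractible), so it is a tree. One small caveat: your parenthetical alternative route rests on the claim that a circuit's finitely many chambers, or a closed gallery, lie in a common apartment, which is false in a general building (only pairs of chambers are guaranteed a common apartment); since you correctly set that aside in favor of the $\CAT(0)$ argument, the proof as written stands.
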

\begin{proof}
Since $Z(A)$ is a tree by Lemma \ref{lem: Z(A) tree}, it is a $1$-dimensional connected simplicial complex, and it is known that such spaces are trees if and only if they are $\CAT(0)$. $Z(\Delta)$ is connected by the same argument that $Z(A)$ is connected, and it is also a $1$-dimensional simplicial complex. Since $Z(\Delta)$ is $\CAT(0)$ by Proposition \ref{prop Z(A) CAT(0) then Z(Delta) is too}, $Z(\Delta)$ must be a tree.
\end{proof}

Now we are ready to prove Theorem \ref{thm: (A) not FP2}.

\begin{proof}[Proof of Theorem \ref{thm: (A) not FP2}]
Let $X = Z(\Delta_+)\times Z(\Delta_-)$ with $Z = |K(\cS')|$. We have shown that $Z(\Delta_{\pm})$ are both trees in Lemma \ref{lemma Z(Delta) tree}, and both are clearly CW-complexes. Therefore, the product $X$ is a product of two trees which is contractible as a product of contractible spaces and is a CW-complex, where the cells are products of non-empty cells from the two trees. \\
Now it remains to show that the $G$-action on $X$ described above yields finite stabilizers of unbounded order. \\
Now we examine the cell stabilizers. Let $C_{\pm}$ be the fundamental chamber in $\Delta_{\pm}$. Then $G_{[C_{\pm},z]} = B_{\pm}\langle S_z\rangle B_{\pm}$.
As discussed above, the strong transitivity of the action of $G$ on its twin building implies that every cell stabilizer is conjugate to $G_{([C_+,z],[wC_-,z'])} = B_+\langle S_z\rangle B_+\cap wB_-\langle S_{z'}\rangle B_-w^{-1}$ for some $z,z'\in Z$, $w\in W$. Since $\langle S_z\rangle$ and $\langle S_{z'}\rangle$ are both finite, this intersection is always finite by Lemma \ref{lem: finite cell stabilizers}. Thus all cell stabilizers are finite. Moreover, if we let $z,z'$ be interior points of $Z$, then the stabilizer of the cell $([C_+, z],[wC_-,z'])$ is $B_+W_JB_+\cap wB_-W_JB_-w^{-1}$ which contains $B_+\cap wB_-w^{-1}$ as a subgroup. We know that this subgroup grows without bound as $\ell(w)\to\infty$ by Prop \ref{prop: bounds on finite subgroups}, so we have stabilizers of unbounded order.\\
Thus Theorem \ref{thm: Gandini specific} implies that $G$ is not $\FP_2$.
\end{proof}

We also record the corresponding result for the action of $B_{\pm}$ on $\sC_{\mp}$. Note that Lemma 6.70(ii) in \cite{AB08} states that $B_{\pm}$ in fact acts transitively on each $w$-sphere in $\sC_{\mp}$. We now apply the same argument for the whole group acting on a twin building to the Borel subgroups acting on one half of a twin building. Hence the realization is a tree instead of a product of two trees. There are still finite subgroups of unbounded order: the examples of such subgroups in $G$ given in Proposition \ref{prop: bounds on finite subgroups} already lie inside $B_+$ or $B_-$. Also, the cell stabilizers are all conjugate in $G$ to $B_{\pm}\cap B_{\mp}\langle S_z\rangle B_{\mp}$, for some $z\in Z$. These stabilizers are finite by Lemma \ref{lem: finite cell stabilizers}.

\begin{proposition}\label{prop: Borel (A)}
Suppose that $G$ is a group satisfying (A). Then the subgroups $B_+$ and $B_-$ are not $\FP_1$ and hence not finitely generated.
\end{proposition}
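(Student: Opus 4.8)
The plan is to mimic the proof of Theorem \ref{thm: (A) not FP2}, but with $G$ replaced by $B_{\pm}$ and the product of two trees replaced by a single tree. We already know from Lemma \ref{lemma Z(Delta) tree} that $Z(\Delta_{\mp})$ is a tree when $Z = |K(\cS')|$; in particular it is a $1$-dimensional contractible CW-complex. So the main task is to verify that $B_+$ (and symmetrically $B_-$) acts cellularly on $Z(\Delta_-)$ with finite cell stabilizers of unbounded order, and then invoke Theorem \ref{thm: Gandini general} (or Theorem \ref{thm: Gandini specific} with $n=1$) to conclude that $B_{\pm}$ is not $\FP_1$, hence not finitely generated by the Remark following the definition of $\FP_n$.

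First I would record that $B_+$ acts on $Z(\Delta_-)$ by restriction of the $G$-action $g.[C,z] = [gC,z]$, and this action is cellular by the same argument given in Section 2 (the restriction of a cellular action is cellular). Next, for the cell stabilizers: every chamber $C \in \sC_-$ can, by Lemma 6.70(ii) of \cite{AB08}, be moved by some element of $B_+$ into the $w$-sphere about $C_-$ for $w = \delta^*(C_+, C)$, so by the usual conjugacy reduction it suffices to bound the stabilizers of cells of the form $[wC_-, z]$, which are $B_+ \cap wB_-\langle S_z\rangle B_- w^{-1} = B_+ \cap wP_{S_z} w^{-1}$. Since condition (A) guarantees $S_z$ is spherical for every $z \in Z$, Lemma \ref{lem: finite cell stabilizers} applies (with $I = \emptyset$, so $P_I = B_+$, and $J = S_z$): the index $[B_+ \cap wP_{S_z}w^{-1} : B_+ \cap wB_-w^{-1}]$ is finite, and $B_+ \cap wB_-w^{-1}$ is finite by Proposition \ref{prop: bounds on finite subgroups}. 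Hence all cell stabilizers are finite. For unboundedness, I would take $z$ an interior point of $Z$, so $S_z = J$ and the stabilizer of $[wC_-, z]$ contains $B_+ \cap wB_-w^{-1}$; by Proposition \ref{prop: bounds on finite subgroups}, $|B_+ \cap wB_-w^{-1}| \geq |T|\qmin^{\ell(w)} \to \infty$ as $\ell(w) \to \infty$, using that $W$ is infinite. So the orders of finite stabilizers are unbounded.

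With these three facts in hand — contractible $1$-dimensional CW-complex, cellular action, finite stabilizers of unbounded order — Theorem \ref{thm: Gandini specific} with $n = 1$ gives that $B_+$ is not of type $\FP_1$, and the symmetric argument with $B_-$ acting on $Z(\Delta_+)$ gives the same for $B_-$. Since $\FP_1$ is equivalent to finite generation, this proves the proposition. I do not anticipate a serious obstacle here: every ingredient is already assembled in the excerpt, and this is essentially the $m=1$ half of Proposition \ref{prop: Gandini-reformulated} specialized to the $Z$-realization $|K(\cS')|$ attached to condition (A). The only point requiring a line of care is the conjugacy reduction for cells whose $z$-coordinate lies in a panel (so $S_z \neq \emptyset$), but this is handled verbatim by the strong-transitivity argument already used for $G$ acting on $X$, together with Lemma \ref{lem: finite cell stabilizers}.
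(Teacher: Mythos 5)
Your proposal is correct and follows essentially the same route as the paper: the paper proves this by the paragraph preceding the proposition (together with Proposition \ref{prop: Gandini-reformulated}), applying Gandini's theorem to the action of $B_{\pm}$ on the tree $Z(\Delta_{\mp})$, noting that the finite subgroups $B_+\cap wB_-w^{-1}$ of unbounded order already lie in $B_{\pm}$ and that the cell stabilizers $B_{\pm}\cap wB_{\mp}\langle S_z\rangle B_{\mp}w^{-1}$ are finite by Lemma \ref{lem: finite cell stabilizers}. Your write-up is, if anything, slightly more careful than the paper's about performing the conjugacy reduction inside $B_+$ rather than in $G$.
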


\begin{remark}
Proposition \ref{prop: Borel (A)} in fact holds for any parabolic subgroup of spherical type, i.e. a subgroup of the form $B_{\pm}W_J B_{\pm}$ where $W_J$ is finite. This follows from Lemma \ref{lem: finite index in parabolic} since, in general, if $H$ is a finite index subgroup of $G$, then $G$ is $\FP_n$ if and only if $H$ is $\FP_n$.
\end{remark}

\begin{remark}\label{rem: didn't use spherical}
Note that in proving that $Z(\Delta)$ is a tree, we never made use of the fact that $J\cup \{s\}$ and $J\cup \{t\}$ were spherical. This was only used to establish finite stabilizers when discussing finiteness properties. This observation leads to the following proposition in which we only consider the action of $G$ on a single building, not the twin building.
\end{remark}

\begin{proposition}\label{prop: amalgamated product}
If $G$ has Coxeter system $(W,S)$ such that there exist generators $s,t\in S$ with $m(s,t) = \infty$, then $G$ acts on a tree with a segment as fundamental domain. Furthermore, if we name the edge $e$ with vertices $v$ and $w$, then $G = G_v *_{G_e} G_w$ is the amalgamated product of the vertex stabilizers over the edge stabilizer.
\end{proposition}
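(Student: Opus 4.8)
The plan is to realize the desired tree as a $Z$-realization of one half of the building, reusing the argument that produced the trees in Lemmas \ref{lem: Z(A) tree} and \ref{lemma Z(Delta) tree} but in the degenerate case $|K| = 2$ and \emph{without} any sphericity hypothesis, as anticipated in Remark \ref{rem: didn't use spherical}. Concretely, set $J := S \setminus \{s,t\}$, take $\Delta := \Delta_+$ (either half works) with Weyl distance $\delta$, and let $Z$ be a single $1$-simplex (a segment) with vertices $z_s$ and $z_t$; declare $Z_s := \{z_s\}$, $Z_t := \{z_t\}$, and $Z_r := Z$ for every $r \in J$. This is a legitimate $Z$-realization datum in the sense of Section 2, and one computes $S_{z_s} = J \cup \{s\}$, $S_{z_t} = J \cup \{t\}$, and $S_z = J$ for every interior point $z$ of $Z$. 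First I would record the analogue of Lemma \ref{lem: panel structure (A)} for this $Z$: for $C, D \in \Delta$ we have $\delta(C,D) \in W_J \iff Z(C) = Z(D)$, and $\delta(C,D) \in W_{J\cup\{s\}} \iff [C, z_s] = [D, z_s]$, and similarly with $t$ in place of $s$.

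Next I would show that $T := Z(\Delta)$ is a tree by reproducing the proofs of Lemma \ref{lem: Z(A) tree} and Lemma \ref{lemma Z(Delta) tree} essentially verbatim. Connectedness comes from the gallery argument; the absence of circuits reduces --- using $W_{J\cup\{s\}} \cap W_{J\cup\{t\}} = W_J$, so that two distinct $Z$-chambers are glued along at most one panel, of type $s$ or $t$ --- to Lemma \ref{lemma for not reducing to trivial word} applied with $m = 2$, $t_1 = s$, $t_2 = t$, which is precisely where the hypothesis $m(s,t) = \infty$ is used. Finally $T$ is $\CAT(0)$ by Proposition \ref{prop Z(A) CAT(0) then Z(Delta) is too} (the $Z$-realization of an apartment being a connected $1$-dimensional tree, hence $\CAT(0)$), and a connected $1$-dimensional $\CAT(0)$ complex is a tree. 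As Remark \ref{rem: didn't use spherical} observes, none of this uses that $J\cup\{s\}$ or $J\cup\{t\}$ is spherical, so only the existence of the $\infty$ label is needed.

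Then I would read off the Bass--Serre data. The vertex set of $T$ is the disjoint union of the ``$s$-type'' vertices $[C,z_s]$ and the ``$t$-type'' vertices $[C,z_t]$ (disjoint because the gluing relation forces equal $Z$-coordinates); by the panel structure these are in $G$-equivariant bijection with the $(J\cup\{s\})$- and $(J\cup\{t\})$-residues of $\Delta$ respectively, while the edges $Z(C)$ are in bijection with the $J$-residues. Since $G$ acts strongly transitively on the twin building it is transitive on $\sC_+$, hence on each of these three families of residues; therefore $G$ has one edge orbit and two (disjoint) vertex orbits, and the action is cellular, hence without inversions. Thus the single edge $e := Z(C_+)$ with endpoints $v := [C_+, z_s]$ and $w := [C_+, z_t]$ is a strict fundamental domain. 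Using $G_{C_+} = B_+$ and the identity $G_{[C_+, z]} = B_+ \langle S_z \rangle B_+$ from Section \ref{sec: cell stabilizers}, the stabilizers are $G_e = B_+ W_J B_+ = P_{S\setminus\{s,t\}}$, $G_v = B_+ W_{J\cup\{s\}} B_+ = P_{S\setminus\{t\}}$, and $G_w = B_+ W_{J\cup\{t\}} B_+ = P_{S\setminus\{s\}}$. By the fundamental theorem of Bass--Serre theory, a group acting without inversions on a tree with a segment as strict fundamental domain is the amalgam of the two vertex stabilizers over the edge stabilizer, which gives $G = G_v *_{G_e} G_w = P_{S\setminus\{t\}} *_{P_{S\setminus\{s,t\}}} P_{S\setminus\{s\}}$.

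The only step with genuine content is the ``no circuits'' argument, whose weight rests entirely on Lemma \ref{lemma for not reducing to trivial word}; the rest is bookkeeping about residues and parabolic subgroups. I expect the main obstacle to be purely expository: deciding how much of the case-(A) tree argument to reprove here versus cite, and making explicit both that the action is cellular (hence without inversions) and that the $s$-type and $t$-type vertex orbits are distinct, so that the ``segment'' hypothesis of Bass--Serre --- as opposed to the ``loop'' situation that would instead produce an HNN extension --- is genuinely satisfied.
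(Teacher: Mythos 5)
Your proposal is correct and follows essentially the same route as the paper: the same $Z$-realization with $Z$ a segment, $Z_s$ and $Z_t$ the endpoints and $Z_r = Z$ for $r \in J$, the same appeal to the case-(A) tree argument (via Remark \ref{rem: didn't use spherical}, ultimately resting on Lemma \ref{lemma for not reducing to trivial word}), and the same conclusion via Serre's amalgam theorem for an action on a tree with a segment as fundamental domain. The extra bookkeeping you supply (distinctness of the two vertex orbits, absence of inversions, identification of the stabilizers as the standard parabolics) is correct and merely makes explicit what the paper leaves implicit.
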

\begin{proof}
Set $J:= S\setminus \{s,t\}$ Then consider the $Z$-realization of the building $\Delta$ of type $(W,S)$ with $Z$ the following edge $e$:
\begin{center}
\begin{tikzpicture}
\fill (0,0) circle (.07);
\fill (2,0) circle (.07);
\fill (4,0) circle (.07);
\draw (0,0) - - (2,0) node[below, pos = 0]{$J\cup \{s\}$} node[below, pos = 1]{$J$} node[above, pos = 0]{$v$};
\draw (2,0) - - (4,0) node[below, pos = 1]{$J \cup \{t\}$} node[above, pos = 1] {$w$} node[above, pos = 0]{$e$};
\end{tikzpicture}
\end{center}
Define $Z_u = Z$ for $u\in J$, $Z_s = v$, and $Z_t = w$. As noted in Remark \ref{rem: didn't use spherical}, the arguments showing that $Z(\Delta)$ in the more general case will still hold here since it doesn't need $J$, $J\cup\{s\}$, and $J\cup\{t\}$ to be spherical. Hence $Z(\Delta)$ is a tree. Ignoring the topological structure, it is also a combinatorial tree with the segment $Z$ as fundamental domain for the action of $G$. By Theorem 6, I.4.1 \cite{Ser03}, we obtain the decomposition $G = G_v *_{G_e} G_w$.
\end{proof}

\subsection{Groups $\cG(\FF_q)$ with (B)}

\begin{definition}\label{def: (B)}
Suppose $G$ has Coxeter system $(W,S)$. Then we say $G$ satisfies (B) if $W = \langle S\rangle$ such that
$$
S = \coprod_{i=1}^n J_i, \; n\geq 2,
$$
where all the $J_i$ are spherical subsets of $S$ but $m(s,t) = \infty$ whenever $s\in J_i$ and $t\in J_j$ for $i\neq j$.
\end{definition}

\begin{remark}\label{rem: all infinity}
A special case of (B) is when the diagram has all labels $\infty$. As mentioned in the introduction, it was expected that the group was not finitely presented in this case, but no proof was recorded in the literature. To see how (B) applies, let $S = \{s_1,\ldots, s_n\}$; then set $J_i = \{s_i\}$ for $1\leq i\leq n$.
\end{remark}

\begin{remark}\label{rem: free product (B)}
Since $m(s,t) = \infty$ whenever $s\in J_i$ and $t\in J_j$ for $i\neq j$, it follows that $W = W_{J_1}*\cdots *W_{J_n}$, the free product of the spherical subgroups generated by each $J_i$.
\end{remark}

The main result of this subsection is the following:
\begin{theorem}\label{thm: (B) not FP2}
If $G$ satisfies contiion (B), then $G$ is not $\FP_2$ and therefore not finitely presented.
\end{theorem}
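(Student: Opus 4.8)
The plan is to follow the template of the proof of Theorem \ref{thm: (A) not FP2}: exhibit a $Z$-realization of $\Delta_{\pm}$ which is a tree, so that $X = Z(\Delta_+) \times Z(\Delta_-)$ is a $2$-dimensional contractible CW-complex on which $G$ acts cellularly with finite stabilizers of unbounded order, and then apply Proposition \ref{prop: Gandini-reformulated} with $m = 1$. With $S = \coprod_{i=1}^n J_i$ as in Definition \ref{def: (B)}, I would take $\cS' := \{\emptyset, J_1, \ldots, J_n\}$ and let $Z := |K(\cS')|$ be the geometric realization of its flag complex: this is a star with central vertex $\emptyset$ and $n$ leaves $J_1, \ldots, J_n$. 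Set $Z_s :=$ the leaf $J_i$ whenever $s \in J_i$, so that $S_z$ is $\emptyset$ at every interior point (including the center) and equals $J_i$ at the $i$-th leaf; in all cases $S_z$ is spherical, so hypothesis (a) of Proposition \ref{prop: Gandini-reformulated} holds. Finiteness of every cell stabilizer $B_+ W_{S_z} B_+ \cap w B_- W_{S_{z'}} B_- w^{-1}$ is then immediate from Lemma \ref{lem: finite cell stabilizers}, and taking $z, z'$ to be interior points exhibits $B_+ \cap w B_- w^{-1}$ as a subgroup of such a stabilizer, whose order is unbounded as $\ell(w) \to \infty$ by Proposition \ref{prop: bounds on finite subgroups} (note $W = W_{J_1} * \cdots * W_{J_n}$ is infinite since $n \geq 2$). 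As in the (A) case it is essential that we use the action on the \emph{product} of the two $Z$-realizations, not on a single tree, in order to reach $\FP_2$ rather than merely $\FP_1$.

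Thus the whole content of the proof is the verification of hypothesis (b): that $Z(\Delta_{\pm})$ is a tree. Here the combinatorics is actually simpler than in the (A) case, and the role played there by Lemma \ref{lemma for not reducing to trivial word} is taken over by the free-product decomposition $W = W_{J_1} * \cdots * W_{J_n}$ (Remark \ref{rem: free product (B)}) together with the normal form theorem for free products. First I would unwind the equivalence relation: since $S_z = \emptyset$ at the center and along edge interiors, $[C, z] = [D, z]$ there forces $C = D$, so each chamber carries its own copy of $Z$; while $[C, v_i] = [D, v_i]$ at the $i$-th leaf holds iff $\delta(C, D) \in W_{J_i}$. Consequently two distinct $Z$-chambers can be glued only along leaves, and along at most one of them, since $W_{J_i} \cap W_{J_j} = W_{J_i \cap J_j} = \{1\}$ for $i \neq j$; when $Z(C)$ and $Z(D)$ are glued along $v_i$ we have $\delta(C, D) \in W_{J_i} \setminus \{1\}$. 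Connectedness of $Z(\Delta)$ (and of $Z(A)$ for any apartment $A$) follows from a gallery argument exactly as in Lemma \ref{lem: Z(A) tree}.

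For the absence of circuits I would argue first on an apartment $A$, where $\delta$ is additive. A hypothetical circuit in $Z(A)$, after the same reductions as in the proof of Lemma \ref{lem: Z(A) tree} (take the basepoint an interior point of $Z(C_0)$ and delete a $Z$-chamber whenever two consecutive gluings use the same leaf), yields a cyclic sequence $Z(C_0), Z(C_1), \ldots, Z(C_m) = Z(C_0)$ with $Z(C_{j-1})$ glued to $Z(C_j)$ along leaf $v_{i_j}$, with $i_{j-1} \neq i_j$, so that $w_j := \delta(C_{j-1}, C_j) \in W_{J_{i_j}} \setminus \{1\}$. Additivity of $\delta$ along $A$ gives $1 = \delta(C_0, C_0) = w_1 w_2 \cdots w_m$; but $i_j \neq i_{j+1}$ makes the right-hand side a reduced word of syllable length $m \geq 1$ in the free product $W_{J_1} * \cdots * W_{J_n}$, hence nontrivial --- a contradiction. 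So $Z(A)$ is a $1$-dimensional connected simplicial complex with no circuits, i.e. a tree, hence $\CAT(0)$; since $Z(A) = Z(W,S)$, Proposition \ref{prop Z(A) CAT(0) then Z(Delta) is too} shows $Z(\Delta)$ is $\CAT(0)$, and being $1$-dimensional, connected and simplicial it is therefore a tree. Proposition \ref{prop: Gandini-reformulated} with $m = 1$ now yields that $G$ is not $\FP_2$, hence not finitely presented (and, as a byproduct, that $B_{\pm}$ is not finitely generated, the (B)-analogue of Proposition \ref{prop: Borel (A)}). I expect the only delicate point to be this last circuit-reduction argument --- formalizing, just as in the (A) case, why one may assume consecutive gluings use distinct leaves and why choosing an interior basepoint forces $C_m = C_0$ --- after which the free-product normal form closes the proof immediately; the remainder is a direct transcription of the proof of Theorem \ref{thm: (A) not FP2}.
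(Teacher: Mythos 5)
Your proposal is correct, and apart from one step it coincides with the paper's proof: same $Z = |K(\cS')|$ with $\cS' = \{\emptyset, J_1,\ldots,J_n\}$, same panel structure and gluing analysis, same use of the free-product normal form to kill circuits, and the same endgame via the product of two trees and Gandini's theorem. The one place where you genuinely diverge is in proving that $Z(\Delta_\pm)$ is a tree: the paper argues directly in the building, concatenating the galleries of a hypothetical circuit and writing $\delta(C_0,C_0) = w_1\cdots w_m$ with $w_j \in W_{J_{i_j}}\setminus\{1\}$, whereas you first rule out circuits in a single apartment $A$ (where $\delta$ really is additive) and then transfer to $Z(\Delta)$ via Proposition \ref{prop Z(A) CAT(0) then Z(Delta) is too}, exactly as in the proof of Lemma \ref{lemma Z(Delta) tree} for condition (A). Your detour costs one extra invocation of the $\CAT(0)$ transfer result but is the safer route: in a thick building $\delta(C,E) = \delta(C,D)\delta(D,E)$ only holds under a length-additivity hypothesis, so the paper's direct computation of $\delta(C_0,C_0)$ as the product $w_1\cdots w_m$ glosses over a point that your apartment-based argument handles cleanly. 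Both arguments reach the same conclusion, and the rest of your write-up (finite stabilizers via Lemma \ref{lem: finite cell stabilizers}, unbounded orders via Proposition \ref{prop: bounds on finite subgroups} at interior points, and the byproduct that $B_\pm$ is not $\FP_1$) matches the paper.
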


We now repeat the strategy for showing that groups satisfying the condition (A) are not $\FP_2$. That is, we choose an appropriate $Z$ such that $X = Z(\sC_+)\times Z(\sC_-)$ is a product of two trees and that the action of $G$ on $X$ has the desired properties.\\

Let $\cS$ be the set of spherical subsets of $S$. Define $\cS' := \{\emptyset, J_1, J_2,\ldots, J_n\}\subset\cS$ and define $Z = |K(\cS')|$ to be the geometric realization of the flag complex of this subset of spherical subsets of $S$. Define the $s$-panels to be $Z_s = |K(\cS'_{\geq s})|$ for all $s\in S$. Then $Z_s = Z_t$ for all $s,t\in J_i$, $1\leq i\leq n$, by definition. $Z$ is then the simplicial complex in Figure \ref{fig: Z for (B)}, where the panel $Z_s$ for $s\in J_i$ corresponds to the vertex labeled $J_i$ and all other points are interior points of $Z$ which do not lie in any panels.
\begin{figure}[h]
\begin{center}
\begin{tikzpicture}
\fill (0,0) circle (.07);
\fill (.8,0) circle (.07);
\fill (3,0) circle (.07);
\fill (1.5,1.5) circle (.07);
\draw (1.5,1.5) - - (0,0) node[above, pos = 0]{$\emptyset$} node[below, pos = 1]{$ J_1$};
\draw (1.5,1.5) - - (.8,0) node[below, pos = 1]{$J_2$};
\draw (1.5,1.5) - - (3,0) node[below, pos = 1]{ $J_n$};
\fill (1.4, .4) circle (.03);
\fill (1.8, .4) circle (.03);
\fill (2.2, .4) circle (.03);
\end{tikzpicture}
\end{center}
\caption{$Z = |K(\cS')|$}
\label{fig: Z for (B)}
\end{figure}

\begin{lemma}\label{lem: panel structure on (B)}
Let $\Delta\in\{\Delta_+, \Delta_-\}$ and $\delta$ be the corresponding Weyl distance. Let $C,D\in \Delta$.
\begin{enumerate}
\item[(1)] Let $1\leq i\leq n$ and $z\in Z_s$ for some $s\in J_i$. Then $\delta(C,D)\in W_{J_i}$ if and only if $[C,z] = [D,z]$.
\item[(2)] Let $z$ be an interior point of $Z$. Then $[C,z] = [D,z]$ if and only if $C=D$.
\end{enumerate}
\end{lemma}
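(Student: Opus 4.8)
The plan is to unravel the definition of the equivalence relation used in the $Z$-realization and then read everything off from the explicit description of $Z$ and its panels given above. Recall that, by construction, $[C,z] = [D,z]$ in $Z(\Delta)$ if and only if $\delta(C,D)\in\langle S_z\rangle$, where $S_z = \{s\in S\mid z\in Z_s\}$. So both parts reduce to computing $S_z$ (equivalently $\langle S_z\rangle$) for the points $z$ in question, and this lemma is the (B)-analogue of Lemma \ref{lem: panel structure (A)}.

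For part (1), fix $1\leq i\leq n$ and $s\in J_i$. The first step is to identify the panel $Z_s = |K(\cS'_{\geq s})|$ explicitly. Since $\cS' = \{\emptyset, J_1,\ldots,J_n\}$ and the $J_j$ are pairwise disjoint, the only element of $\cS'$ containing $s$ is $J_i$; hence $\cS'_{\geq s} = \{J_i\}$ and $Z_s$ is the single vertex labelled $J_i$. Thus any $z\in Z_s$ equals that vertex, and $S_z = \{t\in S\mid J_i\in\cS'_{\geq t}\} = \{t\in S\mid t\in J_i\} = J_i$, again using disjointness. Therefore $\langle S_z\rangle = W_{J_i}$, and the defining equivalence gives $[C,z]=[D,z]$ if and only if $\delta(C,D)\in W_{J_i}$.

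For part (2), let $z$ be an interior point of $Z$, that is, a point lying in none of the panels $Z_s$ (concretely, $z$ is not one of the leaf vertices $J_1,\ldots,J_n$ — e.g.\ it is the central vertex $\emptyset$ or an interior point of one of the edges). Then $S_z = \emptyset$, so $\langle S_z\rangle = \{1\}$, and $[C,z]=[D,z]$ if and only if $\delta(C,D)=1$, i.e.\ if and only if $C=D$.

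The argument is essentially bookkeeping, so there is no serious obstacle; the one point that needs care is verifying that the panel $Z_s$ for $s\in J_i$ is exactly the vertex $J_i$ and nothing larger — this is precisely where the hypothesis that $S = \coprod_i J_i$ is a disjoint union is used — and, relatedly, that an interior point of $Z$ lies in no panel at all, so that its stabilizer data trivializes the relation. This lemma will then play the same role as Lemma \ref{lem: panel structure (A)} did under hypothesis (A), namely in proving that $Z(\Delta)$ is a tree.
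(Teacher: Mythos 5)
Your proposal is correct and follows essentially the same route as the paper: both parts reduce to computing $S_z$ from the explicit description of $Z$ and its panels (using the disjointness of the $J_i$ to get $S_z = J_i$ on the vertex $Z_s$, and $S_z=\emptyset$ at interior points) and then invoking the defining equivalence relation. Your explicit identification of $\cS'_{\geq s}=\{J_i\}$ is a slightly more detailed bookkeeping step than the paper records, but the argument is the same.
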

\begin{proof}
If $z\in Z_s$ for some $s\in J_i$, then $z\in Z_s$ for all $s\in J_i$. Also, $z\notin Z_t$ for any $t\in J_j$ for $j\neq i$. Hence $S_z = J_i$. Therefore $\delta(C,D) \in W_{J_i}$ if and only if $[C,z] =[D,z]$ by definition of the equivalence relation.\\

If $z$ is an interior point, then $S_z = \emptyset$ so $\langle S_z \rangle = \{1\}$. Therefore $[C,z] = [D,z]$ if and only if $\delta(C,D) =1$ if and only if $C=D$.
\end{proof}

Due to the decomposition of $W$ as a free product as noted in Remark \ref{rem: free product (B)}, the proof that $Z(\Delta)$ is a tree is much easier. In particular, we do not need to show that the apartments are trees first.

\begin{lemma}\label{lem: Z(Delta) tree for (B)}
$Z(\Delta)$ is a tree.
\end{lemma}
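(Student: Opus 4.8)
The plan is to exhibit $Z(\Delta)$ directly as a tree by using the free product structure $W = W_{J_1} * \cdots * W_{J_n}$ from Remark \ref{rem: free product (B)}, rather than going through apartments and $\CAT(0)$ geometry as in the (A) case. First I would observe, exactly as in the proof of Lemma \ref{lem: Z(A) tree}, that $Z(\Delta)$ is connected: a gallery $C = C_0, C_1, \dots, C_m = D$ in $\Delta$ of type $(t_1,\dots,t_m)$ produces a path running through $Z(C_0), Z(C_1), \dots, Z(C_m)$, since consecutive $Z$-chambers are glued along a copy of some $Z_{t_j}$ and each $Z(C_j)$ is itself connected (indeed $Z$ is a tree). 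It is also clearly a $1$-dimensional simplicial complex, so it remains only to show there are no circuits.

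For the no-circuits part, I would argue as follows. By Lemma \ref{lem: panel structure on (B)}(2), two distinct $Z$-chambers never meet at an interior point, so any two distinct $Z$-chambers that are glued are glued along a single panel vertex of type $J_i$ for some $i$ (they cannot be glued along two different such vertices: if $Z(C) = Z(D)$ were witnessed at vertices $J_i$ and $J_j$ with $i \neq j$, then $\delta(C,D) \in W_{J_i} \cap W_{J_j} = \{1\}$ inside the free product, so $C = D$). Now suppose a circuit exists. As in Lemma \ref{lem: Z(A) tree}, I may base it at an interior point and record it as $Z(C_0), Z(C_1), \dots, Z(C_m) = Z(C_0)$ with $Z(C_{j-1})$ and $Z(C_j)$ glued along a panel vertex of type $J_{i_j}$, with $Z(C_{j-1}) \neq Z(C_j)$, and — after deleting backtracks — with $i_{j-1} \neq i_j$ for all $j$; here $C_m = C_0$. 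By definition of the equivalence relation, $w_j := \delta(C_{j-1}, C_j) \in W_{J_{i_j}} \setminus \{1\}$ for each $j$.

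The key step is to pass from $\delta$ to the free product structure. Unlike in the (A) case, $\Delta$ need not be replaced by an apartment, but I still need a way to multiply the $w_j$. The clean way is to retract $\Delta$ onto the apartment $\Sigma = \Sigma\{C_+, C_-\}$ (or any apartment containing a chosen chamber) via a combinatorial retraction $\rho$: such a retraction preserves $s$-adjacency and hence the glued-panel structure, so a circuit in $Z(\Delta)$ maps to a closed edge-path in $Z(\Sigma)$; moreover $\rho$ does not increase, and on minimal galleries preserves, Weyl distances from the center, so the image is still a genuine circuit in $Z(\Sigma)$ with the same panel types $J_{i_j}$. Then inside the Coxeter complex one has $\delta(\rho C_0, \rho C_0) = w_1' \cdots w_m'$ with $w_j' \in W_{J_{i_j}} \setminus \{1\}$ and $i_{j-1} \neq i_j$; since $W = W_{J_1} * \cdots * W_{J_n}$, a product of nontrivial elements from the factors with no two consecutive indices equal is a reduced word in the free product and hence $\neq 1$, contradicting that it equals the identity. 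Therefore no circuit exists and $Z(\Delta)$ is a tree.

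The main obstacle I anticipate is making the retraction argument airtight: one must check that the chosen retraction $\rho$ (say, the retraction $\rho_{\Sigma, C}$ onto an apartment centered at a chamber) indeed carries the circuit to a closed path that is still non-degenerate — i.e. that consecutive $Z$-chambers stay distinct after retraction and that the recorded panel types are unchanged. An alternative that sidesteps retractions entirely is to reuse the machinery already in place: apply Proposition \ref{prop Z(A) CAT(0) then Z(Delta) is too} after first checking $Z(W,S)$ is $\CAT(0)$, which for this $Z$ follows because $Z(\Sigma)$ is a $1$-dimensional complex and one shows it has no circuits by the free-product argument applied inside the Coxeter complex itself (where $\delta(C,E) = \delta(C,D)\delta(D,E)$ holds automatically). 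In the write-up I would present whichever is shorter given what has already been established; the free-product computation is the genuinely new ingredient and is routine once set up.
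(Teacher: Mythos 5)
Your proposal contains two routes; the fallback is sound, the primary (retraction) route is not, and the gap you flag at the end is genuine rather than a formality. A retraction $\rho_{\Sigma,C}$ is not injective on residues, so it can collapse consecutive chambers of your circuit: $C_{j-1}$ and $C_j$ lie in a common $J_{i_j}$-residue with $\delta(C_{j-1},C_j)=w_j\neq 1$, and $\rho$ maps that residue into the corresponding residue of $\Sigma$, but nothing prevents $\rho C_{j-1}=\rho C_j$ (already in type $\tilde A_1$ the retraction folds the tree onto a line and identifies distinct neighbours of a vertex). So the image of your circuit in $Z(\Sigma)$ may be degenerate, and the properties you invoke (``preserves Weyl distance from the centre'', ``preserves $s$-adjacency'') do not rule this out; they control $\delta(\rho C_0,\rho C_j)$ only once you know $\delta(C_0,C_j)$, which is exactly what you are trying to compute.

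The detour through an apartment is in fact unnecessary, and the paper's proof runs your free-product computation directly in the building. The point is length-additivity: since consecutive $w_j$ lie in distinct free factors and all cross-labels are $\infty$, reduced words concatenate (no elementary $M$-operation can mix letters from different factors), so $\ell(w_1\cdots w_{j-1}w_j)=\ell(w_1\cdots w_{j-1})+\ell(w_j)$ at each step; the standard building axiom that Weyl distances multiply when lengths add then gives $\delta(C_0,C_m)=w_1\cdots w_m$ by induction, and the normal form in $W_{J_1}*\cdots*W_{J_n}$ shows this product of nontrivial letters with no two consecutive indices equal is $\neq 1$, a contradiction. (The paper states the multiplicativity without comment; the length-additivity is what makes it legitimate outside an apartment, and is worth saying.) Your alternative --- prove $Z(\Sigma)$ is a tree by the same free-product argument inside the Coxeter complex, where $\delta$ is multiplicative for free, then invoke Proposition \ref{prop Z(A) CAT(0) then Z(Delta) is too} --- is also correct and is exactly the strategy of the (A) case; it just costs one extra citation. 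Either of these should replace the retraction argument.
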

\begin{proof}
Since $Z$ is connected, $Z(A)$ is connected for any apartment $A$ of $\Delta$ by the argument in the proof of Lemma \ref{lem: Z(A) tree}. The fact that any two chambers in $\Delta$ share some apartment then implies that $Z(\Delta)$ is connected.\\
Now suppose that there is a circuit in $Z(\Delta)$. Since $Z$ itself is a tree, the circuit must involve more than one $Z$-chamber. Fix a point $p$ in the circuit, with $p\in Z(C_0)$ for some chamber $C_0$. We may assume that $p$ is an interior point of this $Z$-chamber since the circuit cannot lie in just one $Z$-chamber. From Lemma \ref{lem: panel structure on (B)}, we know that two distinct $Z$-chambers cannot intersect at an interior point. Therefore, we can only glue $Z$-chambers along $s$-panels $s\in S$. Moreover, we can glue distinct $Z$-chambers along at most one distinct panel (that is, we consider the panel $Z_s$ for all $s\in J_i$ to be just one panel) since if we glue along two panels, we have by \ref{lem: panel structure on (B)}(1) that $\delta(C,D) \in W_{J_i}\cap W_{J_j}$ for some $i\neq j$, so $\delta(C,D) = 1$ and thus $C=D$.
\\
Suppose our circuit from $p$ back to itself passes through $Z$-chambers $Z(C_0),Z(C_1),\ldots, Z(C_m) = Z(C_0)$, where we have $Z(C_m) = Z(C_0)$ since $p$ is an interior point of $Z(C_0)$, and $Z(C_{j-1})$ and $Z(C_j)$ are glued together along their $s_j$-panel for some $s_j\in J_{i_j}\subset S$. We may assume that $s_{j-1}$ and $s_j$ do not lie in the same $J_i$, that is $J_{i_{j-1}}\neq J_{i_j}$, since $J_{i_{j-1}} = J_{i_j}$ would correspond to having $Z(C_{j-1}), Z(C_j),$ and $Z(C_{j+1})$ all sharing a vertex. In this case, our gallery could move directly from $Z(C_{j-1})$ to $Z(C_{j+1})$, so we could remove $Z(C_j)$. Then we get a corresponding gallery in $\Delta$ from $C_0$ to itself passing successively through $C_1, C_2,\ldots, C_{m-1}$. Since $Z(C_{j-1})$ and $Z(C_j)$ are glued along $Z_{s_j}$, we have $\delta(C_{j-1},C_j)= w_j\in W_{J_{i_j}}$. Therefore we have $\delta(C_0,C_0) = w_1\cdots w_m = 1$. However, given Remark \ref{rem: free product (B)}, this is impossible and thus provides a contradiction. Thus there are no circuits in $Z(\Delta)$, so it is a tree.
\end{proof}

Now we are ready to prove Theorem \ref{thm: (B) not FP2}:
\begin{proof}[Proof of Theorem \ref{thm: (B) not FP2}]
Let $X = Z(\Delta_+)\times Z(\Delta_-)$. From Lemma \ref{lem: Z(Delta) tree for (B)}, $X$ is a product of two trees and is therefore a $2$-dimensional contractible CW-complex. $G$ acts on $X$ as before. By strong transitivity of the action of $G$ on the twin building, all cell stabilizers are conjugate to $B_+\langle S_z\rangle B_+\cap wB_-\langle S_{z'}\rangle B_-w^{-1}$ for some $w\in W$, $z,z'\in Z$. These groups are always finite since $S_z$ and $S_{z'}$ are always spherical subsets of $S$. Hence all cell stabilizers are finite. If $z\in Z$ is an interior point, then $G_{([C_+,z],[wC_-,z])} = B_+\cap wB_-w^{-1}$, which is finite but of unbounded order as $\ell(w)\to\infty$. Therefore Theorem \ref{thm: Gandini specific} implies that $G$ is not $\FP_2$.
\end{proof}

The fact that the Borel subgroups are not finitely generated quickly follows by the same discussion just before Proposition \ref{prop: Borel (A)}.

\begin{proposition}\label{prop: Borel (B)}
Suppose $G$ satisfies condition (B). Then the subgroups $B_+$ and $B_-$ are not $\FP_1$ and hence not finitely generated.
\end{proposition}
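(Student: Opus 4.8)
The plan is to reproduce, for condition (B), the argument indicated just before Proposition \ref{prop: Borel (A)}, now using the $Z$-realization of Figure \ref{fig: Z for (B)}. Concretely, take $Z = |K(\cS')|$ with $\cS' = \{\emptyset, J_1,\ldots,J_n\}$ and consider the action of $B_+$ on the single space $Y := Z(\Delta_-)$. By Lemma \ref{lem: Z(Delta) tree for (B)}, $Y$ is a tree, hence a $1$-dimensional contractible CW-complex, and since $G$ acts cellularly on $Z(\Delta_-)$ so does its subgroup $B_+$. Thus $B_+$ acts cellularly on a $1$-dimensional contractible CW-complex, and it remains to check the two further hypotheses of Theorem \ref{thm: Gandini specific} with $n = 1$: finiteness of the cell stabilizers, and the presence of finite subgroups of unbounded order.

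For the first, I would argue as in Section \ref{sec: cell stabilizers}. Strong transitivity of $G$ on the twin building lets one conjugate any $B_+$-cell-stabilizer in $Z(\Delta_-)$ into the form $B_+ \cap wB_-\langle S_z\rangle B_-w^{-1}$ for some $w \in W$ and $z \in Z$ --- this is the one-sided analogue of the reduction performed for the $G$-action on $X = Z(\Delta_+)\times Z(\Delta_-)$ and recorded before Proposition \ref{prop: Borel (A)}. In the $(B)$-realization $S_z$ is always $\emptyset$ or one of the $J_i$, and each $J_i$ is spherical by hypothesis, so these intersections are finite by Lemma \ref{lem: finite cell stabilizers} (applied with $I = \emptyset$). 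Hence all cell stabilizers of the $B_+$-action are finite.

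For the second, the subgroups $B_+ \cap wB_-w^{-1}$, $w \in W$, lie inside $B_+$, are finite by Proposition \ref{prop: bounds on finite subgroups}, and have order at least $|T|\,\qmin^{\ell(w)}$, which is unbounded because $W$ is infinite (indeed $W = W_{J_1}\ast\cdots\ast W_{J_n}$ with $n \geq 2$, cf.\ Remark \ref{rem: free product (B)}). Combining the two points, Theorem \ref{thm: Gandini specific} applied to the action of $B_+$ on $Y$ yields that $B_+$ is not of type $\FP_1$, hence --- since $\FP_1$ is equivalent to finite generation --- $B_+$ is not finitely generated; the identical argument with $\Delta_+$ and $\Delta_-$ interchanged handles $B_-$.

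The only step needing genuine care, and thus the \emph{main obstacle}, is the conjugacy reduction in the second paragraph: one must make sure that an arbitrary $B_+$-cell-stabilizer really is $G$-conjugate to the standard shape $B_+ \cap wB_-\langle S_z\rangle B_-w^{-1}$, which rests on the transitivity properties of strongly transitive actions on twin buildings (and ultimately on Lemma 6.70 in \cite{AB08} together with the fact that any two chambers from opposite halves lie in a common twin apartment). Everything else is an immediate appeal to results already established for the $(B)$-realization, so the proposition is in effect a corollary of Theorem \ref{thm: (B) not FP2} together with Lemma \ref{lem: Z(Delta) tree for (B)}, Lemma \ref{lem: finite cell stabilizers}, and Proposition \ref{prop: bounds on finite subgroups}.
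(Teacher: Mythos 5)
Your proposal is correct and is essentially the paper's own argument: the paper proves this proposition by pointing back to the discussion preceding Proposition \ref{prop: Borel (A)}, i.e.\ applying Theorem \ref{thm: Gandini specific} with $n=1$ to the action of $B_{\pm}$ on the tree $Z(\Delta_{\mp})$, with the stabilizer reduction via Lemma 6.70(ii) of \cite{AB08} and the unbounded finite subgroups $B_+\cap wB_-w^{-1}$ exactly as you describe. The step you flag as the main obstacle is precisely the transitivity of $B_{\pm}$ on codistance spheres that the paper invokes, so nothing further is needed.
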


\subsection{Rank 3 cases}
Now that we have proved the conjecture for these two large classes of diagrams, we can take care of the rank $3$ case in full.

\begin{theorem}
Suppose that $G$ has rank $3$ Weyl group with at least one $\infty$ label in the corresponding Coxeter diagram. Then $G$ is not $\FP_2$ and is therefore not finitely presented.
\end{theorem}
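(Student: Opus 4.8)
The plan is to reduce this statement to the two main theorems of Section~5 by checking that \emph{every} rank-$3$ Coxeter system $(W,S)$ with at least one label equal to $\infty$ satisfies condition (A) (Definition~\ref{def: (A)}) or condition (B) (Definition~\ref{def: (B)}). Write $S=\{s_1,s_2,s_3\}$ and let $k$ be the number of unordered pairs $\{s_i,s_j\}$ with $m(s_i,s_j)=\infty$; by hypothesis $k\in\{1,2,3\}$. Two preliminary observations make all the verifications routine. First, any pair $\{s,t\}\subseteq S$ with $m(s,t)<\infty$ spans a finite special subgroup (dihedral of order $2m(s,t)$, or the Klein four-group when $m(s,t)=2$), so $\{s,t\}$ is a spherical subset of $S$; and every singleton is spherical. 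Second, the $\infty$-pair guaranteed by the hypothesis spans an infinite dihedral subgroup, so $W$ is infinite and the standing assumptions of Section~\ref{sec: cell stabilizers} are in force; hence it suffices to place $(W,S)$ inside (A) or (B) and quote Theorem~\ref{thm: (A) not FP2} or Theorem~\ref{thm: (B) not FP2}.

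I would then split into the three cases according to $k$, using the elementary fact that any two edges of the triangle on $\{s_1,s_2,s_3\}$ are incident. If $k=3$, then $m(s_i,s_j)=\infty$ for all $i\neq j$, which is exactly condition (B) with $J_i=\{s_i\}$ (cf. Remark~\ref{rem: all infinity}). If $k=2$, the two $\infty$-edges meet at a common vertex, say $s_2$, so $m(s_1,s_2)=m(s_2,s_3)=\infty$ and $m(s_1,s_3)<\infty$; then $S=J_1\sqcup J_2$ with $J_1=\{s_1,s_3\}$ spherical and $J_2=\{s_2\}$, while $m(s,t)=\infty$ for all $s\in J_1$, $t\in J_2$, so condition (B) holds with $n=2$. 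If $k=1$, relabel so that $m(s_1,s_2)=\infty$ while $m(s_1,s_3)<\infty$ and $m(s_2,s_3)<\infty$; then $J:=\{s_3\}$ and $K:=\{s_1,s_2\}$ exhibit condition (A), since $|K|=2$, $m(s_1,s_2)=\infty$, and $\{s_3,s_1\}$, $\{s_3,s_2\}$ are both spherical — this is precisely the hypothesis of Corollary~\ref{cor: spherical as possible}. In each case the cited theorem yields that $G$ is not $\FP_2$, and hence not finitely presented.

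The proof has no genuine analytic content: everything substantial was already carried out in the (A) and (B) subsections. The only points requiring care are bookkeeping ones — checking that the trichotomy in $k$ is exhaustive (which rests on the incidence of any two edges of a triangle) and handling the Coxeter-diagram convention so that a missing edge ($m=2$) and a finite labelled edge are treated uniformly, since in both cases the rank-$2$ subgroup is finite and so the subset is spherical. Accordingly, I expect the ``hard part'' to be organizational rather than mathematical.
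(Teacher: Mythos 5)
Your proposal is correct and follows essentially the same route as the paper: both arguments split on the number of $\infty$-labels, handle the three- and two-label cases via condition (B) (with the same choices of $J_i$) and the one-label case via condition (A) with $J$ the vertex off the $\infty$-edge. Your extra remarks on sphericity of finite-dihedral pairs and the exhaustiveness of the trichotomy are just the bookkeeping the paper leaves implicit.
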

\begin{proof}
As mentioned in the introduction, this result is already known when all labels are infnite by letting $G$ act on the Davis realization of the twin building. It also follows from Theorem \ref{thm: (B) not FP2} where $J_1 = \{s\}, J_2 = \{t\}, J_3 = \{u\}$ if $S = \{s,t,u\}$. The case where the diagram has two infinite labels also follows from Theorem \ref{thm: (B) not FP2}. In this case, suppose that $m(s,t)<\infty$. Then set $J_1 = \{s,t\}$ and $J_2 = \{u\}$. Lastly, the case with one infinite label, say $m(s,t) = \infty$ follows from Theorem \ref{thm: (A) not FP2} with $J = \{u\}$ and $K = \{s,t\}$.
\end{proof}

\bibliographystyle{alpha}
\bibliography{KacMoodyFinPres}

\begin{thebibliography}{Gan12}

\bibitem[AB08]{AB08}
Peter Abramenko and Kenneth~S. Brown.
\newblock {\em Buildings}, volume 248 of {\em Graduate Texts in Mathematics}.
\newblock Springer, New York, 2008.
\newblock Theory and applications.

\bibitem[AM97]{AM97}
Peter Abramenko and Bernhard M{\"u}hlherr.
\newblock Pr\'esentations de certaines {$BN$}-paires jumel\'ees comme sommes
  amalgam\'ees.
\newblock {\em C. R. Acad. Sci. Paris S\'er. I Math.}, 325(7):701--706, 1997.

\bibitem[Bro87]{Bro87}
Kenneth~S. Brown.
\newblock Finiteness properties of groups.
\newblock In {\em Proceedings of the {N}orthwestern conference on cohomology of
  groups ({E}vanston, {I}ll., 1985)}, volume~44, pages 45--75, 1987.

\bibitem[Dav08]{Dav08}
Michael~W. Davis.
\newblock {\em The geometry and topology of {C}oxeter groups}, volume~32 of
  {\em London Mathematical Society Monographs Series}.
\newblock Princeton University Press, Princeton, NJ, 2008.

\bibitem[Gan12]{Gan12}
Giovanni Gandini.
\newblock Bounding the homological finiteness length.
\newblock {\em Bull. Lond. Math. Soc.}, 44(6):1209--1214, 2012.

\bibitem[Kro93]{Kro93}
Peter~H. Kropholler.
\newblock On groups of type {$({\rm FP})\sb \infty$}.
\newblock {\em J. Pure Appl. Algebra}, 90(1):55--67, 1993.

\bibitem[Ser03]{Ser03}
Jean-Pierre Serre.
\newblock {\em Trees}.
\newblock Springer Monographs in Mathematics. Springer-Verlag, Berlin, 2003.
\newblock Translated from the French original by John Stillwell, Corrected 2nd
  printing of the 1980 English translation.

\bibitem[Tit69]{Tit69}
Jacques Tits.
\newblock Le probl\`eme des mots dans les groupes de {C}oxeter.
\newblock In {\em Symposia {M}athematica ({INDAM}, {R}ome, 1967/68), {V}ol. 1},
  pages 175--185. Academic Press, London, 1969.

\end{thebibliography}

\Addresses

\end{document}